\theoremstyle{plain}
\newtheorem{thm}[subsection]{Theorem}
\newtheorem{lem}[subsection]{Lemma}
\newtheorem{prop}[subsection]{Proposition}
\newtheorem{cor}[subsection]{Corollary}
\theoremstyle{definition}
\newtheorem{rk}[subsection]{Remark}
\newtheorem{defn}[subsection]{Definition}
\newtheorem{ex}[subsection]{Example}
\numberwithin{equation}{section} \setcounter{tocdepth}{1}
\newcommand{\C}{\mathbb{C}}
\newcommand{\OO}{\mathcal{O}}
\newcommand{\ir}{\mathcal{R}}
\newcommand{\ia}{\mathcal{A}}
\newcommand{\ik}{\mathcal{K}}
\newcommand{\jj}{\mathcal{J}}
\begin{document}

\title[Invariants of Topological Relative Right Equivalences]{Invariants of Topological Relative Right Equivalences}

\author[I. Ahmed]{Imran Ahmed$^{1}$}
\thanks{1 Supported by FAPESP, grant \# 2010/01895-3}
\address{Imran Ahmed, Department of Mathematics, COMSATS Institute of Information Technology, M.A. Jinnah Campus, Defence Road, off Raiwind Road Lahore, Pkistan and Departamento de Matem$\acute{a}$ticas, Instituto de Ci\^{e}ncias Matem$\acute{a}$ticas e de Computa\c{c}\~{a}o, Universidade de S\~{a}o Paulo, Avenida Trabalhador
S\~{a}ocarlense 400, S\~{a}o Carlos-S.P., Brazil.}
\email{drimranahmed@ciitlahore.edu.pk}
\author[M.A.S. Ruas]{Maria Aparecida Soares Ruas$^{2}$}
\thanks{2 Partially supported by FAPESP, grant \# 08/54222-6, and CNPq, grant
\# 303774/2008-8}
\address{Maria Aparecida Soares Ruas, Departamento de Matem$\acute{a}$ticas, Instituto de Ci\^{e}ncias Matem$\acute{a}$ticas e
de Computa\c{c}\~{a}o, Universidade de S\~{a}o Paulo, Avenida Trabalhador
S\~{a}ocarlense 400, S\~{a}o
Carlos-S.P., Brazil.} \email{maasruas@icmc.usp.br}
\author[J.N. Tomazella]{Jo\~{a}o Nivaldo Tomazella}
\address{Jo\~{a}o Nivaldo Tomazella, Departamento de Matem$\acute{a}$tica, Universidade Federal de S\~{a}o Carlos - UFSCar Caixa Postal 676 S\~{a}o Carlos - S\~{a}o Paulo, CEP 13560-905 Brazil}
\email{tomazella@dm.ufscar.br}

\subjclass[2000]{Primary 14E05,
32S30, 14L30 ; Secondary 14J17, 16W22.}

\keywords{function germ, integral closure, Bruce-Roberts number.}

\maketitle


\section{Introduction}

We fix a system of local coordinates $x$ of $\C^n$. Consider the
ring $\OO_n$ of holomorphic germs $f:(\C^n,0)\to\C$ and denote by
$m_n$ its maximal ideal. Due to identification between $\OO_n$ and
the ring of convergent power series $\C\{x_1,\ldots,x_n\}$ we
identify a germ $f\in\OO_n$ with its power series $f(x)=\sum
a_{\alpha}x^{\alpha}$, where
$x^{\alpha}=x_1^{\alpha_1}\ldots x_n^{\alpha_n}$.

The Milnor number of a germ $f$ with isolated singularity, denoted by $\mu(f)$, is
algebraically defined as the $\dim_{\C}\OO_n/\jj_f$, where $\jj_f$
denotes the ideal generated by partial derivatives $\partial
f/\partial x_1$, $\ldots$, $\partial f/\partial x_n$ and the multiplicity $m(f)$ is the lowest degree in the power series expansion of $f$ at $0\in \C^n$.

A deformation $F:(\C^n\times\C,0)\to(\C,0)$, $F(x,t)=f_t(x)$, of $f$ is $\mu$-constant if
$\mu(f_t)=\mu(f)$ for small values of $t$. We denote by
$\jj_F=\langle\partial F/\partial x_1, \ldots, \partial F/\partial
x_n\rangle$, the ideal in $\OO_{n+1}$ generated by the partial
derivatives of $F$ with respect to the variables $x_1,\ldots,x_n$.

The Milnor number is a topological invariant of the singularity, more precisely if two germs of complex hypersurfaces with isolated singularities are homeomorphic, then have the same Milnor number. We also have by \cite{Le1}, for $n\neq 3$, that if a family of hypersurfaces is a $\mu$-constant family then the hypersurfaces are homeomorphic. Some definitions and properties for this number can be found in the famous book of Milnor \cite{Mi}.

The constancy of the Milnor number has several characterizations which were summarized by Greuel in the following theorem \cite{Gr}, p.161.

\begin{thm}\label{thg}(\cite{Gr}) For any deformation $F:(\C^n\times\C,0)\to (\C,0)$ of a function germ $f$ with isolated singularity the following statements are equivalent.\\
(1) $F$ is a $\mu$-constant deformation of $f$.\\
(2) For every holomorphic curve $\gamma:(\C,0)\to (\C^n\times\C,0)$
$$\nu(\frac{\partial F}{\partial t}\circ \gamma)>inf\{\nu(\frac{\partial F}{\partial x_i}\circ\gamma)\,|\,i=1,\ldots,n\},$$
(where $\nu$ denotes the usual valuation of a complex curve).\\
(3) same statement as in (2) with "$>$" replaced by "$\geq$".\\
(4) $\frac{\partial F}{\partial t}\in \overline{\jj_F}$, (where $\overline{\jj_F}$ denotes the integral closure of $\jj_F$ in $\OO_{n+1}$).\\
(5) $\frac{\partial F}{\partial t}\in\sqrt{\jj_F}$, (where$\sqrt {\jj_F}$ denotes the radical of $\jj_F$).\\
(6) The polar curve of $F$ with respect to $\{t=0\}$ does not split i.e.
$$C=\{(x,t)\in\C^n\times\C\,|\,\frac{\partial F}{\partial x_i}=0, i=1\ldots,n\}=\{0\}\times\C \,\,near \, (0,0).$$
\end{thm}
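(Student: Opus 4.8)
The plan is to establish the cycle of equivalences by proving implications that chain together, exploiting the standard dictionary between integral closure, valuations, and the radical. Let me sketch how I would organize this.

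The plan is to prove the six conditions equivalent by splitting them into an \emph{algebraic} group $(2)$--$(5)$, governed by the theory of integral closure, and a \emph{geometric} pair $(1),(6)$, governed by the conservation of the Milnor number, and then to bridge the two groups. I work under the standard normalization attached to a deformation of the \emph{germ} $f$, namely $F(0,t)=0$ with the origin a critical point of each $f_t$; without $F(0,t)=0$ the conditions already diverge (e.g. $F=x^2+t$ satisfies $(1),(6)$ but not $(5)$), so this hypothesis is genuinely used.

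The algebraic backbone is the Lejeune--Teissier valuative criterion: for $I=\langle g_1,\dots,g_r\rangle\subset\OO_{n+1}$, a germ $h$ lies in $\overline{I}$ iff $\nu(h\circ\gamma)\ge\min_i\nu(g_i\circ\gamma)$ for every analytic curve $\gamma$. Taking $h=\partial F/\partial t$ and $I=\jj_F$ gives $(3)\Leftrightarrow(4)$ at once, while $(2)\Rightarrow(3)$ is trivial. For $(4)\Rightarrow(5)$ I would use that any $h\in\overline{\jj_F}$ satisfies a monic relation $h^k+a_1h^{k-1}+\cdots+a_k=0$ with $a_j\in\jj_F^{\,j}$, so $h$ vanishes on $V(\jj_F)$ and hence $h\in\sqrt{\jj_F}$; that is, $\overline{\jj_F}\subseteq\sqrt{\jj_F}$.

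For the geometric pair I would invoke conservation of the Milnor number: since $f_0$ has an isolated singularity, the local degree of the gradient is preserved, so for small $t$ one has $\sum_p\mu(f_t,p)=\mu(f)$, the sum over the critical points $p$ of $f_t$ in a fixed small ball. As $C$ is exactly the union of the traces of these points, $\mu$-constancy $\mu(f_t,0)=\mu(f)$ forces every critical point to stay at the origin, i.e. $C=\{0\}\times\C$, and conversely $C=\{0\}\times\C$ keeps all of $\mu(f)$ concentrated at $0$; this is $(1)\Leftrightarrow(6)$. The short bridge $(6)\Rightarrow(5)$ then follows from the R\"uckert Nullstellensatz: $\sqrt{\jj_F}=I(C)=I(\{0\}\times\C)=\langle x_1,\dots,x_n\rangle$, and $F(0,t)=0$ gives $\partial F/\partial t(0,t)=0$, so $\partial F/\partial t\in\langle x_1,\dots,x_n\rangle=\sqrt{\jj_F}$.

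The hard part is closing the cycle in the reverse direction, and it has two faces. First, the passage from the radical back to the integral closure for the specific element $\partial F/\partial t$, i.e. $(5)\Rightarrow(4)$: since in general $\overline{I}\subsetneq\sqrt{I}$, this is special to $\partial F/\partial t$ and must exploit the isolated-singularity hypothesis. The mechanism I would use is that $(5)$ forces $dF\equiv 0$ along every branch of $C$, so $F|_C\equiv 0$ and every critical point of every $f_t$ sits on the zero fibre; a rigidity argument (Rolle in one variable, and in general via the curve selection lemma together with conservation of $\mu$) then shows that split-off critical points cannot all carry the value $0$, so no splitting occurs and $C=\{0\}\times\C$, which returns us to $(6)$. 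Second, the upgrade of the non-strict inequality to the strict one, $(3)\Rightarrow(2)$: I would derive this from $(6)$, showing that an equality $\nu(\partial F/\partial t\circ\gamma)=\min_i\nu(\partial F/\partial x_i\circ\gamma)$ along some curve transverse to the parameter axis would manifest a first-order splitting of the relative polar locus, again contradicting $C=\{0\}\times\C$. Translating a curve that violates the valuation inequality into a critical point that splits off the origin, powered by conservation of the Milnor number (equivalently, Teissier's identification of $\partial F/\partial t\in\overline{\jj_F}$ with $\mu$-constancy), is where essentially all the difficulty of the theorem is concentrated.
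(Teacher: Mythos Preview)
The paper does not contain a proof of this theorem. Theorem~\ref{thg} is quoted verbatim as a result of Greuel \cite{Gr} and is used only as background and motivation for the relative versions $(1_r)$--$(6_r)$ studied later; no argument for it appears anywhere in the paper. So there is no ``paper's own proof'' to compare your proposal against.

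On the proposal itself: your organization into the algebraic block $(2)$--$(5)$ and the geometric pair $(1),(6)$ is standard and sound, and the easy implications $(2)\Rightarrow(3)\Leftrightarrow(4)\Rightarrow(5)$, $(1)\Leftrightarrow(6)$, and $(6)\Rightarrow(5)$ are handled correctly (your explicit use of $F(0,t)=0$ is exactly what is needed for $(6)\Rightarrow(5)$). Where the sketch remains genuinely incomplete is precisely where you flag it. For $(5)\Rightarrow(6)$, the observation that $dF\equiv 0$ on $C$ forces $F|_C\equiv 0$ is correct, but the sentence ``split-off critical points cannot all carry the value $0$'' is not an argument: nothing you have written explains why a branch of $C$ distinct from $\{0\}\times\C$ on which $F\equiv 0$ is impossible. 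Likewise, for $(3)\Rightarrow(2)$ the phrase ``first-order splitting of the relative polar locus'' gestures at the right phenomenon but does not supply the mechanism. In Greuel's treatment these two closures are obtained via Teissier's sharper statement that $\mu$-constancy is equivalent to $\partial F/\partial t\in\overline{m_n\cdot\jj_F}$ (integral closure of the ideal multiplied by the maximal ideal in the $x$-variables), which immediately yields the strict valuative inequality of $(2)$; your sketch does not reach this, and without it the cycle does not close.
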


Bruce and Roberts in \cite{BR}, present the relative case, i.e., they study function germs $f:(\C^n,0)\rightarrow(\C,0)$ taking into account a germ of fixed analytic variety $(V,0)\subset(\C^n,0)$. They introduce a generalization of the Milnor number of $f$, which we will call the Bruce-Roberts number, $\mu_{BR}(V,f)$. Like the Milnor number of $f$, this number shows some properties of $f$ and $V$. For example, if we consider the group $\mathcal R_V$ of automorphisms of $(\C^n,0)$ preserving $V$ then $f$ is finitely determined with respect to the action of $\mathcal R_V$ on $\mathcal O_n$
if and only if $\mu_{BR}(V,f)$ is finite and, in this case, the codimension of the orbit of $f$ under this action is $\mu_{BR}(V,f)$.

This paper presents a study of Theorem \ref{thg} \cite{Gr} in the case of families of functions with isolated singularities defined on an analytic variety. In this relative case, not all equivalences hold. We shall denote the conditions $(1)$ to $(6)$ in the relative case by $(1_r)$ to $(6_r)$. The Example \ref{ex1} shows that the implications $(1_r)\Rightarrow(4_r)$ and $(5_r)\Rightarrow(4_r)$ do not hold in relative case. The Example \ref{ex3} proves that $(2_r)$ is not equivalent to $(3_r)$.

In Theorem \ref{th1} we show that $(2_r)\Rightarrow(3_r)\Leftrightarrow(4_r)\Rightarrow(5_r)$ in relative case. In Proposition \ref{th2} we prove the equivalence $(1_r)\Leftrightarrow(6_r)$ in relative case assuming $C=\{(x,t)\in\C^n\times\C\,|\,dF(\xi_i)=0, i=1\ldots,p\}$ is a Cohen-Macaulay variety.

In Theorem \ref{th3} we show that the implication $(4_r)\Rightarrow(1_r)$ holds in relative case assuming that the logarithmic characteristic variety of $V$, $LC(V)$, is Cohen-Macaulay such that $V$ is a hypersurface with isolated singularity.

In the last section, we consider families of quasihomogeneous functions defined on quasihomogeneous varieties based on the results of \cite{Bruna}. We discuss the topological invariance of the Bruce-Robert Milnor number. Theorem \ref{thg2} is an interesting application to the relative Zariski multiplicity conjecture. We prove that the conjecture holds for $\mu_{BR}$-constant deformations of a quasihomogeneous germ defined on quasihomogeneous varieties.

\section{Preliminary Results}

Let $f:(\C^n,0)\to(\C,0)$ be the germ of a holomorphic function with isolated singularity. Consider the analytic variety
$V\subset(\C^n,0)$. In this note we study function germs $f:(\C^n,0)\to(\C,0)$ under the equivalence relation
that preserves the analytic variety $(V,0)$. We
say that two germs $f^1,f^2:(\C^n,0)\to(\C,0)$ are $C^0$-$\ir_V$-equivalent if
there exists a germ of homeomorphism $\psi:(\C^n,0)\to(\C^n,0)$ with
$\psi(V)=V$ and $f^1\circ\psi=f^2$. That is,
$$C^0\mbox{-}\ir_V=\{\psi\in C^0\mbox{-}\ir:\psi(V)=V\}$$
where $C^0$-$\ir$ is the group of germs of homeomorphisms of $(\C^n,0)$, and we consider its action on the ring $\OO_n$ of
 germs of holomorphic functions $f:(\C^n,0)\to(\C,0)$.

We denote by $\theta_n$ the set of germs of tangent vector fields in
$(\C^n,0)$; $\theta_n$ is a free $\OO_n$ module of rank $n$. Let $I(V)$ be the ideal in $\OO_n$ consisting of germs of
analytic functions vanishing on $V$. We denote by
$\Theta_V=\{\eta\in\theta_n:\eta(I(V))\subseteq I(V)\}$, the
submodule of germs of vector fields tangent to $V$.

The tangent space to the action of the group $\ir_V$ is $T\ir_V(f)=df(\Theta_V^0)$, where $\Theta_V^0$ is the
submodule of $\Theta_V$ given by the vector fields that are zero at zero. When the point $x=0$ is a stratum in the
logarithmic stratification of the analytic variety, this is the case when $V$ has an isolated singularity at
 the origin, see \cite{BR} for details, both spaces $\Theta_V$ and $\Theta_V^0$ coincide.

In what follows we assume that $\Theta_V$ is generated by $\xi_1,\ldots,\xi_p$.

\begin{defn} Let $\jj_f(\Theta_V)$ be the ideal $\{df(\xi_i) :i=1,...,p\}$ in $\mathcal O_n$ and $f\in\mathcal O_n$. The number $$\mu_{BR}(V,f)=\dim_\C\frac{\mathcal O_n}{\jj_f(\Theta_V)}$$ is the Bruce-Roberts number of $f$ with respect to $V$.\end{defn}

We call a holomorphic map germ $F:(\C^n\times\C,0)\to (\C,0)$, $(x,t)\mapsto F(x,t)=f_t(x)$ a deformation of $f$ if $f_0=f$ and if $f_t(0)=0$ for $t$ sufficiently near to 0.

We denote by $\jj_F(\Theta_V)$ the ideal $\langle dF(\xi_i): i=1,...,p \rangle$ of $\OO_{n+1}$, where
 $dF=( \frac {\partial F}{\partial x_1},\ldots,  \frac {\partial F}{\partial x_n})$.
 The deformation $F$ of $f$  is $\mu_{BR}$-constant if $\mu_{BR}(V,f_t)=\mu_{BR}(V,f)$ for $t$ sufficiently small.

\medskip
The following is an open question in this theory.

\medskip

Is any $\mu_{BR}$-constant deformation topologically $\ir_V$ trivial?
\medskip

In \cite{BR}, Bruce and Roberts define the logarithmic characteristic variety of $V$, $LC(V)$ and show important properties of $\mu_{BR}(V,f)$ when $LC(V)$ is Cohen-Macaulay (CM). By Prop. 5.10 \cite{BR}, p.72 we know that if the codimension of $V$ is greater than one then $LC(V)$ is not CM. If $V$ is free divisor, in particular a plane curve, $LC(V)$ is CM, and, when $V$ is a quasihomogeneous hypersurface with isolated singularity, $LC(V)$ is CM by \cite{Bruna}. For any hypersurface the problem remains open.

Let us suppose that the vector fields $\xi_1,\ldots,\xi_p$ generate $\Theta_V$ for some neighbourhood $U$ at $0\in \C^n$. Then if $T_U^*\C^n$ is the restriction of the cotangent bundle of $\C^n$ in $U$, we define $LC_U(V)$ as $\{(x,\delta)\in T_U^*\C^n:\, \delta(\xi_i(x))=0,\,i=1,\ldots,p\}$. Then, $LC(V)$ is the germ of $LC_U(V)$ in $T_0^*\C^n$, and it can be shown that it is independent of the choice of generators of $\Theta_V$.

 The integral closure of an ideal $I$ in a ring $R$, is the ideal $\overline{I}$, of the elements $h\in R$ that satisfy a relation $h^k+a_1h^{k-1}+\ldots+a_{k-1}h+a_k=0$, with $a_i\in I^i$.

Teissier gave  the following characterization for the integral closure of an ideal in $\OO_n$.

\begin{thm}\label{prop1}( \cite{Te}, p.288)
If $I$ is an ideal in $\OO_n$, the following statements are equivalent.\\
1. $h\in \overline{I}$.\\
2. For each system of generators $h_1,\ldots,h_r$ of $I$ there exists a neighbourhood $U$ of 0 and a constant $c>0$ such that $$\mid h(x)\mid\leq c\sup\{\mid h_1(x)\mid,\ldots,\mid h_r(x)\mid\},\,\,\mbox{for all}\,\,x\in U.$$
3. For each analytic curve $\gamma:(\C,0)\to(\C^n,0)$, $h\circ\gamma$ lies in $(\gamma^*(I))\OO_1$.
\end{thm}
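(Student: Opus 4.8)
The plan is to establish the cycle of implications $(1)\Rightarrow(2)\Rightarrow(3)\Rightarrow(1)$, the first two steps being elementary estimates and the last carrying all of the geometric content.

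For $(1)\Rightarrow(2)$ I would start from an integral dependence relation $h^k+a_1h^{k-1}+\cdots+a_k=0$ with $a_i\in I^i$. Writing $M(x)=\sup\{|h_1(x)|,\ldots,|h_r(x)|\}$, the membership $a_i\in I^i$ yields, on a suitable neighbourhood $U$ of $0$, bounds $|a_i(x)|\le c_iM(x)^i$. Feeding these into $|h|^k\le\sum_{i=1}^k|a_i|\,|h|^{k-i}$ and setting $|h(x)|=\tau(x)M(x)$ reduces the estimate to the one-variable inequality $\tau^k\le\sum_i c_i\tau^{k-i}$, whose nonnegative solutions are bounded by a constant depending only on the $c_i$. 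This bounds $\tau$, hence gives $|h(x)|\le cM(x)$ on $U$, which is exactly condition $(2)$; the points where $M(x)=0$ are handled by continuity.

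For $(2)\Rightarrow(3)$ I would compose with an analytic curve $\gamma$. For small $|s|$ the point $\gamma(s)$ lies in $U$, so $|h(\gamma(s))|\le c\sup_i|h_i(\gamma(s))|$. Each $h_i\circ\gamma\in\OO_1=\C\{s\}$ has a well-defined order $\nu(h_i\circ\gamma)$, and the supremum on the right behaves like $|s|^{m}$ with $m=\min_i\nu(h_i\circ\gamma)=\nu(\gamma^*(I))$. The inequality then forces $\nu(h\circ\gamma)\ge m$, which says precisely that $h\circ\gamma$ lies in the principal ideal $(s^m)=(\gamma^*(I))\OO_1$, since $\OO_1$ is a discrete valuation ring and every ideal in it is integrally closed.

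The decisive and hardest step is $(3)\Rightarrow(1)$, for which I would invoke the normalized blow-up $\pi\colon B\to(\C^n,0)$ of $I$. On $B$ the pulled-back ideal $I\OO_B$ is invertible, cutting out an effective divisor $D=\sum_j r_jE_j$ supported on the exceptional fibre, and the standard identity $\overline{I}=\pi_*(I\OO_B)\cap\OO_n$ reduces membership $h\in\overline{I}$ to the divisorial conditions $\nu_{E_j}(\pi^*h)\ge r_j=\nu_{E_j}(I)$ for every component $E_j$ (using that $B$ is normal, so membership in the invertible sheaf $I\OO_B$ is detected in codimension one). To verify each inequality from $(3)$, I would realize the divisorial valuation $\nu_{E_j}$ by a curve: choosing a smooth point of $E_j$ off the other components and a curve $\tilde\gamma$ in $B$ meeting $E_j$ transversally there, its image $\gamma=\pi\circ\tilde\gamma$ is an analytic curve in $\C^n$ satisfying $\nu(g\circ\gamma)=\nu_{E_j}(\pi^*g)$ for all $g\in\OO_n$. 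Applying $(3)$ to this $\gamma$ gives $\nu_{E_j}(\pi^*h)=\nu(h\circ\gamma)\ge\nu(\gamma^*(I))=\nu_{E_j}(I)=r_j$, and since $j$ was arbitrary we conclude $\pi^*h\in I\OO_B$, that is, $h\in\overline{I}$. The main obstacle is exactly this blow-up machinery: establishing $\overline{I}=\pi_*(I\OO_B)\cap\OO_n$ and, above all, showing that analytic curves suffice to probe every relevant divisorial valuation. This last point is the heart of Teissier's characterization and the reason condition $(3)$ need range only over curves rather than over arbitrary valuations.
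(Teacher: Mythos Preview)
The paper does not prove this theorem at all: it is stated as a result of Teissier, with the citation ``(\cite{Te}, p.288)'', and no proof is supplied. So there is nothing in the paper to compare your argument against.

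That said, your outline is the standard one and is essentially correct. The implications $(1)\Rightarrow(2)$ and $(2)\Rightarrow(3)$ are indeed elementary in the way you describe, and for $(3)\Rightarrow(1)$ the normalized blow-up of $I$ is exactly the tool Teissier (and Lejeune--Teissier) use: one reduces membership in $\overline{I}$ to divisorial conditions on the exceptional components, and then realizes each divisorial valuation by an analytic arc meeting that component transversally at a generic point. Your identification of the two technical ingredients --- the identity $\overline{I}=\pi_*(I\OO_B)\cap\OO_n$ for the normalized blow-up, and the fact that arcs suffice to compute divisorial valuations --- is accurate; both are standard in this circle of ideas and are precisely what is established in the reference cited by the paper.
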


Item 3 of this theorem is called the valuation criterion since it is equivalent to the condition $\nu(h\circ\gamma)\geq \inf\{\nu(h_1\circ\gamma),\ldots,\nu(h_r\circ\gamma)\}$, where $\nu$ denotes the usual valuation of a complex curve.







\section{Main Results}

By Theorem \ref{thg}, we have the following statements for the relative case:\\
$(1_r)$ $F$ is  a $\mu_{BR}$-constant deformation of $f$;\\
$(2_r)$ For every holomorphic curve $\gamma:(\C,0)\to (\C^n\times\C,0)$
$$\nu(\frac{\partial F}{\partial t}\circ \gamma)>inf\{\nu(dF(\xi_i)\circ\gamma) : i=1,\ldots,p\};$$
$(3_r)$ Same statement as in $(2_r)$ with "$>$" ${}$ replaced by "$\geq$";\\
$(4_r)$ $\frac {\partial F}{\partial t} \in \overline {\jj_F(\Theta_V)}$;\\
$(5_r)$  $\frac {\partial F}{\partial t} \in \sqrt {\jj_F(\Theta_V)}$;\\
$(6_r)$ The polar curve of $F$ in $V$ with respect to $\{t=0\}$ does not
split, i.e.,
$$C=\left\{  (x,t) \in \C^n\times \C  :  dF(\xi_i(x)) = 0, \forall \  i= 1,\ldots , p \right\} = \{0\}\times \C \, \mbox{ near }\,  (0,0).$$

We now discuss the equivalences in the relative case. In \cite{RT}, the second and third authors investigated the $C^0$-$\ir_V$ triviality of families of functions with isolated singularity satisfying condition $(4_r)$. More precisely, in Theorem 6.4 they show that $(4_r)$ and $(6_r)$ imply $F$ is $C^0$-$\ir_V$ trivial. Moreover based on the example from \cite{RT} we show in the next example that the implications $(1_r)$ $\Rightarrow$ $(4_r)$ and $(5_r)$ $\Rightarrow$ $(4_r)$ do not hold in relative case.

\begin{ex}\label{ex1}
Let $(V,0)\subset(\C^3,0)$ be defined by $\Phi(x,y,z)=2x^2y^2+y^3-z^2+x^4y=0$ and $F:(\C^4,0)\to(\C,0)$ given by $F(x,y,z,t)=y+(a+t)x^2$. The module $\Theta_V$ is generated by $\eta_1=(2x,4y,6z)$, $\eta_2=(0,2z,x^4+4x^2y+3y^2)$, $\eta_3=(x^2+3y,-4xy,0)$ and $\eta_4=(z,0,2x^3y+2xy^2)$. The element $\frac{\partial F}{\partial t}=x^2$ is not in the integral closure of the ideal $\jj_{F}(\Theta_V)$. In fact, given $\gamma:(\C,0)\to(\C^4,0)$, $\gamma(s)=(s,-as^2,0,0)$, it follows that $\frac{\partial F}{\partial t}\circ \gamma$ is not in $(\gamma^{\ast}(\jj_{F}(\Theta_V)))_{\OO_1}$, then by Theorem \ref{prop1}, $\frac{\partial F}{\partial t}=x^2$ does not belong to $\overline{\jj_{F}(\Theta_V)}$. Now, the variety $V$ is weighted homogeneous of type $(1,2,3;6)$ and $F$ is also weighted homogeneous of degree 2 with respect to the same set of weights. Moreover, $F$ is a deformation by nonnegative weights of the $\ir_V$-finitely determined weighted homogeneous germ $f_0=y+ax^2$. Then $F$ is $C^0$-$\ir_V$-trivial (see \cite{D} and \cite{RT2}). One can also verify from direct computations that $\mu_{BR}(V,f_t)$ is constant.

Note that $\jj_F(\Theta_V)=\langle 4(a+t)x^2+4y,2z,2(a+t)x^3+6(a+1)xy-4xy\rangle$. It can be easily seen that there exists $k$ such that $m_3^k\OO_4\subset\jj_F(\Theta_V)\subset m_3\OO_4$. It implies that $m_3\OO_4\subset\sqrt{\jj_F(\Theta_V)}$. Therefore, $x^2=\frac{\partial F}{\partial t}\in\sqrt{\jj_F(\Theta_V)}$.
\end{ex}

The following example shows that $(2_r)$ is not equivalent to $(3_r)$.

 \begin{ex}\label{ex3} Let $ (V,0) \subseteq (\mathbb{C}^{2},0)$ be defined by $\Phi(x,y)=x^{3}-y^{2}=0$. The set $\Theta_{V}$ is generated by $\xi_{1}=(2x,3y),\xi_{2}=(2y,3x^{2})$. Let $F(x,y,t)=x^{5}+y^{2}+tx^5$. Thus,  $\jj_F(\Theta_V)=\left\langle  dF(\xi_1),dF(\xi_2) \right\rangle =
 \left\langle 10x^5 +6y^2 +10tx^5,10x^4y+6x^2y+10tx^4y \right\rangle $.

 We show that $\frac{\partial F}{\partial t}=x^5$ satisfies $(3_r)$ but does not satisfy $(2_r)$.
 In fact, let
  $\gamma:(\C,0)\to(\C^3,0), \,\, \gamma(s)=(\gamma_1(s),\gamma_2(s),\gamma_3(s))$,
   then $\nu(\frac{\partial F}{\partial t}\circ \gamma)=5\nu(\gamma_1)$.

   If $\nu(\gamma_2)\leq \nu(\gamma_1)$ then $\nu(dF(\xi_1)\circ \gamma)=2\nu(\gamma_2)<5\nu(\gamma_1)$.

   If $2\nu(\gamma_2)=5 \nu(\gamma_1)$ then $\nu(dF(\xi_2)\circ \gamma)=2\nu(\gamma_1)+\nu(\gamma_2)=\frac{9}{2}\nu(\gamma_1)<5\nu(\gamma_1)$.

   If $2\nu(\gamma_2)>5 \nu(\gamma_1)$ then $\nu(dF(\xi_1)\circ \gamma)=5\nu(\gamma_1)$.

   If $2\nu(\gamma_2)<5 \nu(\gamma_1)$ then $\nu(dF(\xi_1)\circ \gamma)=2\nu(\gamma_2)<5 \nu(\gamma_1)$.

    Therefore, $\nu(\frac{\partial F}{\partial t}\circ \gamma)\geq inf \{\nu(dF(\xi_1)\circ \gamma),\nu(dF(\xi_2)\circ \gamma)\}$

 Now, by taking $\alpha:(\C,0)\to(\C^3,0)$, $\alpha(s)=(s,0,0)$, we get
  $$\nu(\frac{\partial F}{\partial t} \circ \alpha)=inf \{ \nu(dF(\xi_1)\circ \alpha),\nu(dF(\xi_2)\circ\alpha)\}=5$$

\end{ex}


We establish now the following equivalences in relative case.

\begin{thm}\label{th1}
Let $V=\Phi^{-1}(0),\,\,\Phi:(\C^n,0)\to (\C^p,0)$ and $F:(\C^n\times\C,0)\to (\C,0)$ be any deformation of $f$. Then
$$(2_r) \Rightarrow (3_r) \Leftrightarrow (4_r) \Rightarrow (5_r).$$
\end{thm}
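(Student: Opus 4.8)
The plan is to treat the three links of the chain separately, with the middle equivalence $(3_r)\Leftrightarrow(4_r)$ carrying essentially all the content and the two outer implications being formal. First I would dispose of $(2_r)\Rightarrow(3_r)$, which is immediate: if the strict inequality $\nu(\frac{\partial F}{\partial t}\circ\gamma)>\inf\{\nu(dF(\xi_i)\circ\gamma):i=1,\ldots,p\}$ holds for every holomorphic curve $\gamma$, then in particular the corresponding weak inequality holds for every $\gamma$, and that is precisely $(3_r)$.

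For the central equivalence $(3_r)\Leftrightarrow(4_r)$ I would invoke Teissier's characterization (Theorem \ref{prop1}) directly, but applied in the ring $\OO_{n+1}$ rather than $\OO_n$, since both the ideal $\jj_F(\Theta_V)$ and the element $\frac{\partial F}{\partial t}$ live in $\OO_{n+1}$. Concretely, take the ring $\OO_{n+1}$, the ideal $I=\jj_F(\Theta_V)$ with generators $h_i=dF(\xi_i)$ for $i=1,\ldots,p$, and the element $h=\frac{\partial F}{\partial t}$. The equivalence of item~1 with the valuation form of item~3 in Theorem \ref{prop1} then reads: $h\in\overline{I}$ if and only if for every analytic curve $\gamma:(\C,0)\to(\C^{n+1},0)=(\C^n\times\C,0)$ one has $\nu(h\circ\gamma)\geq\inf\{\nu(h_i\circ\gamma):i=1,\ldots,p\}$. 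The right-hand condition is verbatim $(3_r)$ and the membership $h\in\overline{I}$ is verbatim $(4_r)$, so the two statements are identified and the equivalence follows.

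Finally, $(4_r)\Rightarrow(5_r)$ reduces to the standard inclusion $\overline{I}\subseteq\sqrt{I}$, which I would prove from the definition of integral closure recalled in the excerpt. If $\frac{\partial F}{\partial t}\in\overline{\jj_F(\Theta_V)}$, then it satisfies a relation
$$\left(\frac{\partial F}{\partial t}\right)^k+a_1\left(\frac{\partial F}{\partial t}\right)^{k-1}+\cdots+a_{k-1}\frac{\partial F}{\partial t}+a_k=0,\qquad a_i\in\jj_F(\Theta_V)^i.$$
Since $\jj_F(\Theta_V)^i\subseteq\jj_F(\Theta_V)$ for every $i\geq 1$, each summand $a_i\left(\frac{\partial F}{\partial t}\right)^{k-i}$ lies in $\jj_F(\Theta_V)$; rearranging the relation gives $\left(\frac{\partial F}{\partial t}\right)^k\in\jj_F(\Theta_V)$, whence $\frac{\partial F}{\partial t}\in\sqrt{\jj_F(\Theta_V)}$, which is $(5_r)$.

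I do not expect a genuine obstacle in this argument: the only point requiring care is to apply Teissier's theorem in the correct ambient ring $\OO_{n+1}$ and to match its valuation criterion exactly with the statement of $(3_r)$, after which all three links are formal. It is worth emphasizing that none of these implications reverses in general, as Examples \ref{ex1} and \ref{ex3} already demonstrate for $(1_r)\Rightarrow(4_r)$, $(5_r)\Rightarrow(4_r)$, and $(3_r)\Rightarrow(2_r)$.
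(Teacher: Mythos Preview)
Your proposal is correct and follows exactly the paper's approach: the two outer implications are dismissed as trivial and the equivalence $(3_r)\Leftrightarrow(4_r)$ is identified with Teissier's valuation criterion (Theorem~\ref{prop1}). You simply spell out in more detail what the paper compresses into one line, including the explicit verification that $\overline{I}\subseteq\sqrt{I}$.
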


\begin{proof}
$(2_r)\Rightarrow(3_r)$ and $(4_r) \Rightarrow (5_r)$ are trivial, while $(3_r)$ $\Leftrightarrow$ $(4_r)$ is the valuation test for integral dependence, Theorem \ref{prop1}.
\end{proof}

The proof of the equivalence between $(1_r)$ and $(6_r)$ depends on the principle of conservation of numbers. The following example indicates that conditions on the variety $V$ alone are not sufficient and the principle may fail even when $V$ is a smooth variety.

\begin{ex}\label{ex2}(Example 5.9,\cite{BR})
Let $V:x_1=x_2=0$ be a non-singular surface in $\C^4$ containing $0$. Since $V$ has codimension $>1$, it follows from Proposition 5.10 in \cite{BR} that $LC(V)$ is not Cohen-Macaulay. The module $\Theta_V$ of vector fields tangent to $V$ is generated by
$$\langle x_1\frac{\partial}{\partial x_1},x_2\frac{\partial}{\partial x_2},x_2\frac{\partial}{\partial x_1},x_1\frac{\partial}{\partial x_2},\frac{\partial}{\partial x_3},\frac{\partial}{\partial x_4}\rangle.$$
Let $f(x_1,x_2,x_3,x_4)=x_1^2+x_2^2+x_3^2+x_4^2$. Then $$\jj_f(\Theta_V)=df(\Theta_V)=\langle x_1^2,x_2^2,x_1 x_2,x_3,x_4\rangle.$$ Therefore, $\mu_{BR}(V,f)=dim_{\C}\frac{\OO_4}{\jj_f(\Theta_V)}=3$. Take $F=f_t=x_1^2+x_2^2+x_3^2+x_4^2+t x_1$ a deformation of $f$. Then $dF=( 2x_1+t,2x_2,2x_3,2x_4)$. Therefore, $$\jj_F(\Theta_V)=dF(\Theta_V)=\langle 2x_1^2+tx_1,2x_2^2,2x_1 x_2+tx_2,2x_1x_2,2x_3,2x_4\rangle.$$ Note that $C:x_2=x_3=x_4=0, x_1=0\mbox{ or }x_1=-t/2$. The singularities of $f_t$, $t\neq 0$ are (0,0,0,0) and (-t/2,0,0,0). Now, $\mu_{BR}(V,f_t)\mid_{(0,0,0,0)}=1=\mu_{BR}(V,f_t)\mid_{(-t/2,0,0,0)}$.
\end{ex}

\begin{prop}\label{th2}
Let $V=\Phi^{-1}(0),\,\,\Phi:(\C^n,0)\to (\C^m,0)$ and $F:(\C^n\times\C,0)\to (\C,0)$ be any deformation of $f$. Let $C=\{(x,t)\in\C^n\times\C\,|\,dF(\xi_i)=0, i=1\ldots,p\}$ be the polar curve. Now, we state $(1_r)$ and $(6_r)$ as follows.\\
$(1_r)$ $F$ is a $\mu_{BR}$-constant deformation of $f$.\\
$(6_r)$ The polar curve of $F$ with respect to $\{t=0\}$ does not split i.e. $C=\{0\}\times\C$ near $(0,0)$. Then

(i) $(1_r)\Rightarrow(6_r)$

(ii) If the variety $C$ is Cohen-Macaulay then $(6_r)\Rightarrow(1_r)$
\end{prop}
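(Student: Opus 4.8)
The plan is to prove both implications by analyzing the fibers of the polar curve $C$ over the $t$-axis and relating their lengths to the Bruce-Roberts numbers $\mu_{BR}(V,f_t)$. The key tool is the principle of conservation of numbers: for a flat family, the sum of the lengths of the fibers over points near $0$ equals the length of the special fiber. Recall that $\mu_{BR}(V,f_t)=\dim_\C \OO_n/\jj_{f_t}(\Theta_V)$ is exactly the length of the fiber $C\cap\{t=\text{const}\}$ at the relevant point, since $\jj_{f_t}(\Theta_V)$ is generated by the $dF(\xi_i)$ with $t$ held fixed.

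For part (i), the implication $(1_r)\Rightarrow(6_r)$: I would argue by contradiction. Suppose $F$ is $\mu_{BR}$-constant but that $C$ splits, meaning that near $(0,0)$ the curve $C$ contains points $(x,t)$ with $x\neq 0$ for small $t\neq 0$. Finiteness of $\mu_{BR}(V,f)$ forces $C$ to be one-dimensional (a curve) with the $t$-axis as one component. If extra branches or extra points appear in the fiber over small $t\neq 0$, then the length of $C\cap\{t=\text{const}\}$ computed at the origin alone would be strictly smaller than the total fiber length, so the length concentrated at the moving singular points would be positive. Since $\mu_{BR}(V,f_0)$ is the length of the central fiber and equals the full fiber length only when no splitting occurs, the presence of moving zeros would force $\mu_{BR}(V,f_t)|_0 < \mu_{BR}(V,f_0)$, contradicting constancy. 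Thus $C=\{0\}\times\C$ near the origin.

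For part (ii), the implication $(6_r)\Rightarrow(1_r)$ under the Cohen-Macaulay hypothesis on $C$: here the Cohen-Macaulay assumption is what makes the projection $\pi\colon C\to\C$, $(x,t)\mapsto t$, flat. Indeed, $C$ is one-dimensional by hypothesis $(6_r)$, it maps onto the smooth one-dimensional base $\C$, and a Cohen-Macaulay source mapping with finite fibers onto a smooth (hence regular) target is flat by the local criterion of flatness (miracle flatness). Once flatness is established, conservation of numbers applies: the length of the central fiber equals the sum of the lengths of all points in a nearby fiber. But $(6_r)$ says the only point in any fiber is the origin, so the fiber length is concentrated there, giving $\mu_{BR}(V,f_0)=\mu_{BR}(V,f_t)$ for small $t$. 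This is precisely $(1_r)$.

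The main obstacle I anticipate is establishing flatness of $\pi$ rigorously in part (ii), and more subtly, confirming that the fiber length of $\pi$ genuinely computes $\mu_{BR}(V,f_t)$ at each point rather than some coarser invariant. One must check that $\OO_{n+1}/\jj_F(\Theta_V)$ restricted to a fiber $t=c$ recovers $\OO_n/\jj_{f_c}(\Theta_V)$ — that is, that specialization commutes with forming the quotient — which requires the generators $dF(\xi_i)$ to specialize correctly and no base-point behavior to interfere. Example \ref{ex2} illustrates exactly why the hypothesis on $C$ cannot be dropped: there $V$ is smooth but $LC(V)$ fails to be Cohen-Macaulay, and the polar curve $C$ splits into two branches carrying length $1$ each, so conservation of numbers redistributes the total length and breaks the naive equality. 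Care is therefore needed to isolate precisely where the Cohen-Macaulay condition enters and to ensure the flatness argument does not implicitly assume the conclusion.
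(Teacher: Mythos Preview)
Your approach is essentially the same as the paper's: both prove the proposition via the conservation-of-numbers formula
\[
\sum_{(x,t)\in C\cap(B\times\{t\})}\mu_{BR}(V,f_t,x)=\mu_{BR}(V,f),
\]
using it (or the underlying semicontinuity inequality) to deduce $(1_r)\Rightarrow(6_r)$, and then invoking the Cohen--Macaulay hypothesis on $C$ to make the formula hold as an equality for $(6_r)\Rightarrow(1_r)$. Your write-up is in fact more explicit than the paper's: you isolate that part (i) needs only upper semicontinuity of fiber length, while part (ii) requires genuine conservation obtained from miracle flatness of $\pi:C\to\C$ over a smooth one-dimensional base; the paper states the conservation identity and the CM hypothesis without spelling out the flatness step.
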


\begin{proof}
We prove $(1_r)$ $\Rightarrow$ $(6_r)$. Choose small balls $B=\{x\in \C^n\,|\,\|x\|<\epsilon\}$, $T=\{t\in\C\,|\,|t|<\delta\}$ such that $\delta$ and $\epsilon$ are sufficiently small. Let $C=\{(x,t)\in B\times T\,|\,dF(\xi_i)=0, i=1,\ldots,p\}$, where $\xi_i$ are the generators of $\Theta_V$ and $\pi:B\times T\to T$ the projection. Then, we get
\begin{equation}\label{eq3}
\sum_{(x,t)\in C\cap B\times\{t\}}\mu_{BR}(V,f_t,x)=\mu_{BR}(V,f)
\end{equation}
for all $t\in T$. It follows that $(1_r)$ $\Rightarrow$ $(6_r)$. Conversely,  $(6_r)$ $\Rightarrow$ $(1_r)$ follows immediately from (\ref{eq3}) assuming that $C$ is Cohen-Macaulay.
\end{proof}

Note that the Cohen-Macaulay property of $LC(V)$ holds only for hypersurfaces, see \cite{BR}. We need the following lemma (see \cite{AB}, Lemma 6.1) to prove the Theorem \ref{th4}.

\begin{lem}(\cite{AB})\label{lem1}
Let $f:(\C^n,0)\to(\C^p,0)$ be an analytic map germ and $J$ an ideal of $\OO_p$. Let $f^{\ast}:\OO_p\to\OO_n$ be the ring homomorphism induced by $f$ such that $I=f^{\ast}(J)$. If $\OO_p/J$ is Cohen-Macaulay and $codim V(I)=codim V(J)$, then $\OO_n/I$ is also Cohen-Macaulay.
\end{lem}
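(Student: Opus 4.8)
The plan is to produce a finite free resolution of $\OO_n/I$ over $\OO_n$ of length exactly $c:=\codim V(I)$ and then read off the Cohen--Macaulay property from the Auslander--Buchsbaum formula. Throughout I use that $\OO_p$ and $\OO_n$ are regular local rings, hence Cohen--Macaulay domains in which grade, height and codimension of the support all coincide, and I regard $\OO_n$ as an $\OO_p$--module through $f^{\ast}$, so that $\OO_n/I=\OO_n\otimes_{\OO_p}\OO_p/J$.

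First I would use that $\OO_p/J$ is Cohen--Macaulay to say that $J$ is \emph{perfect} of grade $c=\codim V(J)$: its minimal free resolution
$$0\to F_c\xrightarrow{\phi_c}F_{c-1}\to\cdots\xrightarrow{\phi_1}F_0=\OO_p\to\OO_p/J\to 0$$
has length $c$. Applying the base change $\OO_n\otimes_{\OO_p}(-)$ yields a complex $G_\bullet=F_\bullet\otimes_{\OO_p}\OO_n$ of free $\OO_n$--modules with $H_0(G_\bullet)=\OO_n/I$ and $H_i(G_\bullet)=\tor_i^{\OO_p}(\OO_n,\OO_p/J)$. The entire problem reduces to showing that $G_\bullet$ is acyclic, i.e.\ that these higher Tor modules vanish; this is genuinely something to prove, since $f^{\ast}$ is not assumed flat.

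To establish acyclicity I would apply the Buchsbaum--Eisenbud criterion (``What makes a complex exact''): the free complex $G_\bullet$, with differentials $\psi_k=\phi_k\otimes 1$, is acyclic iff for each $k$ one has $\rank\psi_{k+1}+\rank\psi_k=\rank F_k$ and $\operatorname{grade}_{\OO_n}I(\psi_k)\ge k$, where $I(\psi_k)$ is the ideal generated by the $(\rank\psi_k)$--minors. Since minors commute with $f^{\ast}$, we have $I(\psi_k)=f^{\ast}(I(\phi_k))\OO_n$ and $\rank\psi_k=\rank\phi_k$, so the rank equalities for $G_\bullet$ follow at once from those already known for $F_\bullet$.

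The crux, and the only place the hypothesis $\codim V(I)=\codim V(J)$ enters, is the grade estimate. Here I would first observe that the rank--drop loci sit inside $V(J)$: localizing $F_\bullet$ at a prime $\mathfrak q\not\supseteq J$ kills $\OO_p/J$, so the localized complex is split exact and each $\phi_k$ attains its maximal rank there, giving $V(I(\phi_k))\subseteq V(J)$ for all $k$. Pulling back and using $V(I)=f^{-1}(V(J))$ yields $V(I(\psi_k))=f^{-1}(V(I(\phi_k)))\subseteq V(I)$, so, a subvariety having codimension at least that of any variety containing it,
$$\operatorname{grade}_{\OO_n}I(\psi_k)=\codim_{\C^n}V(I(\psi_k))\ge\codim_{\C^n}V(I)=c\ge k\qquad(1\le k\le c).$$
Thus $G_\bullet$ is a free resolution of $\OO_n/I$ of length $c$, whence $\operatorname{pd}(\OO_n/I)\le c$. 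By Auslander--Buchsbaum, $\operatorname{depth}(\OO_n/I)=n-\operatorname{pd}(\OO_n/I)\ge n-c=\dim(\OO_n/I)$, and since depth never exceeds dimension this is an equality; therefore $\OO_n/I$ is Cohen--Macaulay. The main obstacle is precisely this transfer of the grade conditions across $f^{\ast}$: without the codimension equality the pullback could drop the codimension of some $V(I(\psi_k))$ below $k$, destroying acyclicity, so that hypothesis is exactly what makes the argument go through.
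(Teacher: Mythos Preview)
Your argument is correct and is in fact the standard way this lemma is proved: pull back a length-$c$ free resolution of $\OO_p/J$ along $f^{\ast}$ and verify the Buchsbaum--Eisenbud acyclicity criterion, using the codimension hypothesis precisely to bound the grades of the pulled-back Fitting ideals. One small remark: it is worth noting explicitly that the case $I(\psi_k)=0$ cannot occur under the hypothesis, since that would force $f$ to map into $V(I(\phi_k))\subseteq V(J)$, whence $V(I)=(\C^n,0)$ and $\codim V(I)=0<\codim V(J)$; you implicitly rely on this when passing from containment of supports to the grade inequality.

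As for comparison with the paper: the paper does not give a proof at all. The lemma is quoted verbatim from \cite{AB} (Lemma~6.1 there) and used as a black box in the proof of Theorem~\ref{th4}. So your write-up supplies exactly the argument the paper omits, and it matches the proof one finds in \cite{AB} (which likewise proceeds via the acyclicity criterion for pulled-back perfect complexes).
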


\begin{thm}\label{th4} If $V$ is a hypersurface with $LC(V)$ CM then $(1_r)\Leftrightarrow(6_r)$.
\end{thm}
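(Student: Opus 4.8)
The plan is to deduce the theorem entirely from Proposition \ref{th2}. Part (i) there already gives $(1_r)\Rightarrow(6_r)$ with no hypothesis on $V$, and part (ii) gives the converse as soon as the polar curve $C$ is Cohen-Macaulay. Hence the whole content of the statement reduces to the following: under the hypotheses that $V$ is a hypersurface with $LC(V)$ Cohen-Macaulay, and assuming $(6_r)$, the germ $C$ is Cohen-Macaulay. So I would fix the assumption $(6_r)$ and aim to prove that $\OO_{n+1}/\jj_F(\Theta_V)=\OO_C$ is Cohen-Macaulay, after which Proposition \ref{th2}(ii) closes the argument.

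The key idea is to realize $C$ as a pullback of $LC(V)$ and then apply Lemma \ref{lem1}. In local coordinates $(x,\delta)$ on $T^*\C^n\cong\C^{2n}$, the variety $LC(V)$ is cut out by the $p$ functions $g_i(x,\delta)=\delta(\xi_i(x))$, $i=1,\ldots,p$, generating an ideal $J\subset\OO_{2n}$. I would introduce the analytic map germ
\[
\Psi:(\C^n\times\C,0)\to(T^*\C^n,0)\cong(\C^{2n},0),\qquad \Psi(x,t)=(x,dF(x,t)),
\]
and note that $\Psi^*(g_i)=g_i(x,dF(x,t))=dF(\xi_i)(x,t)$. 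Consequently the extended ideal $I=\Psi^*(J)$ is exactly $\jj_F(\Theta_V)$, the scheme $C=\Psi^{-1}(LC(V))$ carries the expected structure, and $\OO_{n+1}/I=\OO_C$.

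With this identification in hand, Lemma \ref{lem1} applies provided the two codimensions agree. By \cite{BR} the logarithmic characteristic variety $LC(V)$ is conic Lagrangian in $T^*\C^n$, hence of pure dimension $n$, so $\codim LC(V)=n$ in $\C^{2n}$. On the other hand, condition $(6_r)$ states precisely that $C=\{0\}\times\C$, which has dimension $1$, so $\codim C=(n+1)-1=n$ in $\C^{n+1}$. The two codimensions coincide, and $\OO_{2n}/J=\OO_{LC(V)}$ is Cohen-Macaulay by hypothesis; therefore Lemma \ref{lem1} yields that $\OO_{n+1}/I=\OO_C$ is Cohen-Macaulay. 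Proposition \ref{th2}(ii) then gives $(6_r)\Rightarrow(1_r)$, which combined with Proposition \ref{th2}(i) establishes the equivalence $(1_r)\Leftrightarrow(6_r)$.

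The step I expect to demand the most care is the codimension bookkeeping. One must confirm that $LC(V)$ genuinely has pure dimension $n$, which is exactly the Lagrangian property of the logarithmic characteristic variety and is also where the hypersurface hypothesis really matters: for $\codim V>1$ the variety $LC(V)$ is not Cohen-Macaulay by Proposition 5.10 of \cite{BR}, so the CM assumption can only be invoked in the hypersurface case. One must also verify that $(6_r)$ forces $\dim C=1$, so that the codimensions in the source and target of $\Psi$ genuinely match and Lemma \ref{lem1} is applicable. A secondary, but routine, point is that the scheme structure on $C$ defined by $\jj_F(\Theta_V)$ is precisely the one produced by the pullback $\Psi^*$, which is immediate from the identity $\Psi^*(g_i)=dF(\xi_i)$.
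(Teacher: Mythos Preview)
Your proposal is correct and follows essentially the same route as the paper: introduce the map $\Psi(x,t)=(x,dF(x,t))$ to realize $C=\Psi^{-1}(LC(V))$ with $\Psi^*(J)=\jj_F(\Theta_V)$, check that $\codim LC(V)=\codim C=n$, and apply Lemma~\ref{lem1} to transfer the Cohen--Macaulay property from $LC(V)$ to $C$, after which Proposition~\ref{th2} closes the argument. The only minor difference is that the paper asserts $\dim C=1$ outright (this already follows from $\mu_{BR}(V,f)<\infty$, which forces $C\cap\{t=0\}=\{0\}$, together with $\{0\}\times\C\subseteq C$), whereas you obtain it by first assuming $(6_r)$; either way suffices.
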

\begin{proof}  Let $\psi_F:\C^n\times\C\to
T^{\ast}(\C^n)\cong \C^{2n}$, $(x,t)\mapsto(x,dF(x))$. Let $I\subset \OO_{n+1}$ and $J\subset \OO_{2n}$ be ideals
of $C$ and $LC(V)$, respectively. Then
$\psi_F^{\ast}(J)=I$ and $(\psi_F)^{-1}(LC(V))=V(dF(\Theta_V))=C$. As $\dim LC(V)=n$ (see
\cite{BR}) and $\dim C=1$ then,  by Lemma \ref{lem1}, $C$ is CM if $LC(V)$
is CM.
\end{proof}

We now discuss the relation between equivalences in Theorem \ref{th1} and Proposition \ref{th2}. The following result was obtained by Gaffney in \cite{Ga}.

\begin{thm}\label{thga}
Let $G:\C^n\times\C^t\to(\C^m,0)$, $(z,s)\mapsto G(z,s)$, defining $X=G^{-1}(0)$ with reduced structure, $Y=0\times\C^t$ and $X_0$ the smooth part of $X$. Then $\frac{\partial G}{\partial s}\in \overline{\langle z_i\frac{\partial G}{\partial z_j}\rangle}_{\OO_X}$ for all tangent vectors $\frac{\partial}{\partial s}$ to $0\times\C^t$ iff $(X_0,Y)$ is Whitney regular.
\end{thm}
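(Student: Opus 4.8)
The plan is to reduce the equivalence to the valuative/inequality criterion for integral closure of \emph{modules} --- the module analogue of Theorem \ref{prop1} --- together with Teissier's theorem that, over $\C$, Whitney regularity of a stratified pair is equivalent to Verdier's condition $W$. Writing $M = \langle z_i\,\partial G/\partial z_j\rangle = m_Y\cdot JM \subset \OO_X^m$, where $m_Y = (z_1,\dots,z_n)$ is the ideal of $Y$ and $JM = \langle \partial G/\partial z_j\rangle$ is the relative Jacobian module, it then suffices to show that $\partial G/\partial s_k \in \overline{M}_{\OO_X}$ for every $k$ if and only if $(X_0,Y)$ satisfies $W$ along $Y$ near $0$.

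The central step I would set up is an infinitesimal dictionary. On the smooth part $X_0$ the map $G$ is a submersion, so $T_x X_0 = \ker dG_x$, and since $Y$ is cut out by $z_1=\dots=z_n=0$ the distance from $x=(z,s)$ to $Y$ is $\|x-y\|\sim\|z\|$ with $y$ the foot of $x$ on $Y$. Decomposing the parameter direction $\partial/\partial s_k$ orthogonally to $\ker dG_x$ and using $dG_x(\partial/\partial s_k)=\partial G/\partial s_k(x)$ yields
$$\mathrm{dist}\!\left(\frac{\partial}{\partial s_k},\, T_x X_0\right) \sim \frac{\|\partial G/\partial s_k(x)\|}{\sigma(dG_x)},$$
where $\sigma(dG_x)$ is the smallest nonzero singular value of $dG_x$. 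Hence the Verdier bound $\mathrm{dist}(T_yY, T_xX_0)\le C\|x-y\|$ becomes the pointwise inequality $\|\partial G/\partial s_k(x)\|\le C'\,\|z\|\,\sigma(dG_x)$, which is exactly the inequality criterion (item 2 of Theorem \ref{prop1}, in its module form) for $\partial G/\partial s_k \in \overline{m_Y\,JM}$.

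With this dictionary both implications follow. Assuming Whitney regularity, Teissier's theorem gives $W$, the displayed inequality holds, and the module criterion yields $\partial G/\partial s_k \in \overline{M}$ for each $k$. For the converse I would argue contrapositively via the curve criterion (item 3 of Theorem \ref{prop1} for modules): if $W$ fails then the tangent planes of $X_0$ approach $T Y$ more slowly than $\|z\|$ along some sequence; the curve selection lemma packages this into a single analytic arc $\phi$ on $X$, and along $\phi$ the order of $\partial G/\partial s_k\circ\phi$ drops below that forced by $\overline{M}$, contradicting membership. Teissier's equivalence then returns the failure of Whitney regularity.

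The main obstacle is the comparison buried in the dictionary, namely relating the analytic quantity $\sigma(dG_x)$ --- the smallest singular value of $dG_x$ on the normal space --- to the module $JM$ generated by the columns $\partial G/\partial z_j$. Showing that $\|\partial G/\partial s_k\|\le C'\|z\|\,\sigma(dG_x)$ is genuinely equivalent to membership in $\overline{m_Y JM}$ requires controlling how the conormal geometry of the family degenerates, and here the hypotheses that $X$ is reduced and that $X_0$ is the locus where $dG_x$ has maximal rank $m$ (so that $T_xX_0=\ker dG_x$ and $\sigma(dG_x)$ is comparable to the size of the generators of $JM$) are what make the two sides match. This is the technical heart of Gaffney's argument.
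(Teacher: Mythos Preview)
The paper does not prove this theorem at all: it is quoted verbatim as a result of Gaffney \cite{Ga} (``The following result was obtained by Gaffney in \cite{Ga}'') and is used as a black box in the proof of Corollary~\ref{cor1}. So there is no ``paper's own proof'' to compare your proposal against.

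That said, your outline is a fair summary of the strategy in Gaffney's original paper: translate Whitney regularity into Verdier's condition $W$ (Teissier), rewrite $W$ as a pointwise inequality involving $\|z\|$ and the relative Jacobian, and identify that inequality with the analytic/valuative criterion for membership in the integral closure of the module $m_Y\cdot JM$. Your honest flag about the ``main obstacle'' is well placed: the delicate point is precisely that the module inequality criterion involves the \emph{minors} (or, equivalently, the Fitting structure/conormal geometry) of $dG_x$, not the naive sup of the columns $\partial G/\partial z_j$, and matching this to $\sigma(dG_x)$ is where Gaffney's conormal-space and module-theoretic machinery enters. As written, your displayed equivalence between $\|\partial G/\partial s_k(x)\|\le C'\|z\|\,\sigma(dG_x)$ and $\partial G/\partial s_k\in\overline{m_Y\,JM}$ is asserted rather than proved, and is not a direct instance of the module analogue of Theorem~\ref{prop1}; filling this in requires exactly the content of \cite{Ga}. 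So your sketch is on the right track as an outline of Gaffney's argument, but it is not a self-contained proof, and in any case the present paper makes no attempt to supply one.
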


In the above theorem, $\overline{\langle z_i\frac{\partial G}{\partial z_j}\rangle}_{\OO_X}$ denotes the integral closure of the $\OO_X$-module $\langle z_i\frac{\partial G}{\partial z_j}: i,j=1,\ldots,n\rangle$. For definitions and properties, see \cite{Ga}.

Let $V$ be a sufficiently small representative of the germ of an ICIS $(V,0)$. The Milnor fiber of the complex analytic function $f$, defined on $V$, with an isolated singularity at 0 (in the stratified way), has the homotopy type of a bouquet of spheres. The Milnor number of L$\hat{e}$, denoted by $\mu_L(f)$, is defined as the number of spheres in the bouquet.

The following is a corollary of Theorem \ref{thga}.

\begin{cor}\label{cor1}
Let $V$ be an ICIS. Then $\frac{\partial F}{\partial t}\in \overline{\jj_F(\Theta_V)}$ follows that $\mu((f_t)^{-1}(0)\cap V)=\mu_L(f)$ is constant.
\end{cor}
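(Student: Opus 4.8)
\textbf{Proof proposal for Corollary \ref{cor1}.}

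The plan is to deduce this statement as a direct application of Theorem \ref{thga} (Gaffney's theorem), by translating the relative integral-dependence hypothesis into the Whitney regularity condition that controls the constancy of the L\^{e} number. First I would set up the correspondence between the objects in Corollary \ref{cor1} and those in Theorem \ref{thga}. Since $(V,0)$ is an ICIS, write $V=\Phi^{-1}(0)$ for $\Phi:(\C^n,0)\to(\C^m,0)$ defining a complete intersection, and consider the family of varieties $W_t=(f_t)^{-1}(0)\cap V$ inside $V$. The idea is to build a map germ $G$ whose zero set $X$ is the total space of the family $\{W_t\}$ sitting over the parameter space $Y=0\times\C$, so that $\tfrac{\partial G}{\partial s}$ in Gaffney's criterion corresponds to $\tfrac{\partial F}{\partial t}$ here.

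Next I would verify that the hypothesis $\tfrac{\partial F}{\partial t}\in\overline{\jj_F(\Theta_V)}$ matches the integral-closure condition $\tfrac{\partial G}{\partial s}\in\overline{\langle z_i\tfrac{\partial G}{\partial z_j}\rangle}_{\OO_X}$ of Theorem \ref{thga}. The crucial observation is that when $(V,0)$ is a complete intersection, the generators $\xi_1,\ldots,\xi_p$ of $\Theta_V$ are (up to the trivial tangent fields) built from the Hamiltonian-type vector fields obtained from the minors of the Jacobian of $\Phi$, so that the module $\jj_F(\Theta_V)=\langle dF(\xi_i)\rangle$ restricted to $V$ coincides with the relative Jacobian module $\langle z_i\tfrac{\partial G}{\partial z_j}\rangle_{\OO_X}$ appearing in Gaffney's statement. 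I would therefore argue that membership in $\overline{\jj_F(\Theta_V)}$ inside $\OO_{n+1}$ implies membership in the integral closure of the corresponding $\OO_X$-module after restriction to $X$, using the valuation criterion of Theorem \ref{prop1} applied to analytic curves lying on $X$. Once this matching is established, Theorem \ref{thga} yields that the pair $(X_0,Y)$ is Whitney regular along the parameter axis.

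Finally I would invoke the standard consequence that Whitney regularity of the family $\{W_t\}$ over $Y=0\times\C$ forces the topological type of the Milnor fiber to be constant, so that the number of spheres in the bouquet, namely $\mu_L(f_t)$, does not jump; equivalently $\mu((f_t)^{-1}(0)\cap V)=\mu_L(f)$ is constant in $t$. Here I would cite the results underlying the definition of $\mu_L$ for functions on an ICIS, where constancy of $\mu_L$ is equivalent to Whitney equisingularity of the family of fibers.

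I expect the main obstacle to be the precise identification of the module $\jj_F(\Theta_V)$ with Gaffney's relative Jacobian module $\langle z_i\tfrac{\partial G}{\partial z_j}\rangle_{\OO_X}$: the vector fields generating $\Theta_V$ for an ICIS are not literally the coordinate fields $z_i\partial_{z_j}$, so some care is needed to show that the two integral closures agree after restriction to $X$, and to confirm that the restriction $\OO_{n+1}\to\OO_X$ does not lose or create elements of the integral closure in a way that would break the equivalence. Verifying this compatibility, rather than the final topological conclusion, is the delicate step.
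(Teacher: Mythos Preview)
Your proposal is correct and follows essentially the same route as the paper: define $G=(\Phi,F):(\C^n\times\C,0)\to(\C^m\times\C,0)$, show the integral-closure hypothesis on $\partial F/\partial t$ transfers to Gaffney's module condition for $G$, apply Theorem~\ref{thga} to obtain Whitney regularity of $(X_0,Y)$, and conclude that $\mu(V\cap f_t^{-1}(0))=\mu_L(f_t)$ is constant. The only difference is that the paper dispatches the step you correctly flag as delicate---passing from $\tfrac{\partial F}{\partial t}\in\overline{\jj_F(\Theta_V)}_{\OO_{n+1}}$ to $\tfrac{\partial G}{\partial t}\in\overline{\langle z_i\tfrac{\partial G}{\partial z_j}\rangle}_{\OO_X}$---by citing Lemma~6.7 of \cite{RT} rather than re-deriving it from the structure of $\Theta_V$ for an ICIS.
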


\begin{proof}
Let $V=(\Phi)^{-1}(0)$, $\Phi:(\C^n,0)\to(\C^m,0)$ and $G:(\C^n\times\C,0)\to(\C^m\times\C,0)$ defined by $(x,t)\mapsto G(x,t)=(\Phi(x),F(x,t))$. Let $X_t=g_t^{-1}(0)$. The condition $\frac{\partial F}{\partial t}\in \overline{\jj_F(\Theta_V)}_{\OO_{n+1}}$ implies that $\frac{\partial G}{\partial t}\in \overline{\langle z_i\frac{\partial G}{\partial z_j}\rangle}_{\OO_X}$, see Lemma 6.7 in \cite{RT}. Then, from Theorem \ref{thga} we get that $(X_0,Y)$ is Whitney regular. In particular, the Milnor number $\mu(X_t)=\mu(V\cap f_t^{-1}(0))$ is constant. Therefore, $\mu_L(f_t)=\mu(V\cap f_t^{-1}(0))$ is constant.
\end{proof}

The following result was obtained by Grulha Jr., see \cite{Ni}.

\begin{thm}\label{thni}
Let $V\subset \C^n$ be a hypersurface with isolated singularity such that $LC(V)$ is Cohen-Macaulay, and $F:(\C^n\times\C^r,0)\to\C$ a family of functions with isolated singularity, then:\\
(a) $\mu_{BR}(V,f_t)$ constant for the family implies $\mu(f_t)$, $\mu_L(f_t)$ and $Eu_{f_t,V}(0)$ constant for the family, where $Eu_{f_t,V}(0)$ denotes the Euler obstruction of $f_t$ on $V$ at $0$ (see \cite{BTS,Ni}).\\
(b) When $\mu(f_t)$ is constant for the family, we have that $Eu_{f_t,V}(0)$ or $\mu_L(f_t)$ constant for the family implies $\mu_{BR}(V,f_t)$ constant for the family.
\end{thm}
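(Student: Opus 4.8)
The plan is to reduce both implications to two \emph{pointwise} identities relating the four invariants --- each valid for a single function germ with isolated singularity on $(V,0)$ --- together with the upper semicontinuity of the individual invariants in the family. The Cohen--Macaulay hypothesis on $LC(V)$ would enter only in the proof of the first identity: it is exactly what makes $\mu_{BR}$ an intersection-theoretic number obeying the principle of conservation of numbers (as already exploited in Proposition~\ref{th2} and Theorem~\ref{th4}), and it guarantees that $f_t|_V$ has an isolated singularity in the stratified sense so that $\mu_L(f_t)$ and $Eu_{f_t,V}(0)$ are defined. Once the identities are available, the family statements (a) and (b) are formal.

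First I would record the two structural formulas. Since $(V,0)$ is a hypersurface with isolated singularity and $LC(V)$ is Cohen--Macaulay, one expects an additivity formula of L\^e--Greuel type, schematically $\mu_{BR}(V,f)=\mu(f)+\mu_L(f)+c_V$, in which $c_V$ is an invariant of the fixed germ $(V,0)$ alone and hence independent of $t$. Independently, the Brasselet--Massey--Parameswaran--Seade formula for the Euler obstruction of a function with isolated singularity on a variety with isolated singularity should give an identity of the form $Eu_{f,V}(0)=\alpha\,\mu(f)+\beta\,\mu_L(f)+c_V'$, where $\alpha,\beta$ are fixed integers with $\beta\neq 0$ and $c_V'$, like $Eu_V(0)$, depends only on $(V,0)$. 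Both identities hold for every member $f_t$ of the family separately, so no family hypothesis is needed merely to state them; the variety $V$ is fixed throughout the deformation, so $c_V$, $c_V'$ and $Eu_V(0)$ are genuinely constant in $t$.

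Granting these, part (a) would follow from semicontinuity. Assume $\mu_{BR}(V,f_t)$ is constant; the first identity then makes $\mu(f_t)+\mu_L(f_t)$ constant. Since both $\mu(f_t)$ and $\mu_L(f_t)$ are upper semicontinuous in $t$, so that $\mu(f_t)\le\mu(f_0)$ and $\mu_L(f_t)\le\mu_L(f_0)$ for $t$ near $0$, constancy of the sum forces each summand to be separately constant (this is also consistent with Corollary~\ref{cor1}). Feeding the now-constant values of $\mu(f_t)$ and $\mu_L(f_t)$ into the second identity, in which only these numbers and fixed data of $V$ occur, yields constancy of $Eu_{f_t,V}(0)$. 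For part (b), assume $\mu(f_t)$ is constant. If $\mu_L(f_t)$ is constant as well, the first identity immediately gives constancy of $\mu_{BR}(V,f_t)$. If instead $Eu_{f_t,V}(0)$ is constant, then the second identity --- with $\mu(f_t)$ and $c_V'$ fixed and $\beta\neq 0$ --- can be solved for $\mu_L(f_t)$, giving its constancy, and one concludes again by the first identity.

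The hard part will be establishing the additivity identity for $\mu_{BR}$ with the correct correction term $c_V$; this is precisely where the Cohen--Macaulayness of $LC(V)$ is indispensable, guaranteeing that $\mu_{BR}$ is the top term of a genuine conservation-of-number relation rather than merely an upper bound. A secondary point to verify is that all four quantities are upper semicontinuous over the full parameter space $\C^r$ and are evaluated at the same point of each fibre; the former is standard for Milnor-type and obstruction-type numbers, while the latter is legitimate because, under the hypotheses, $\mu_{BR}$-constancy is equivalent to non-splitting of the polar curve (Theorem~\ref{th4}), so no critical point escapes from the origin. The pointwise nature of the two identities then makes the passage from one-parameter deformations to the family over $\C^r$ automatic.
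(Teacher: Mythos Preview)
The paper does not prove this theorem at all: it is quoted as a result of Grulha~\cite{Ni} and then used as a black box in the proof of Theorem~\ref{th3}. There is therefore no ``paper's own proof'' against which to compare your attempt.

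For what it is worth, your outline is in the spirit of the argument in the cited source. The two pointwise identities you posit do exist: under the Cohen--Macaulay hypothesis on $LC(V)$, Bruce and Roberts' decomposition (Proposition~5.8 of \cite{BR}) expresses $\mu_{BR}(V,f)$ as $\mu(f)$ plus the number of stratified critical points of a Morsification of $f$ lying on $V\setminus\{0\}$; this second number is linked to $\mu_L(f)$ via the classical L\^e--Greuel formula for the ICIS defined by $(\Phi,f)$, and to $Eu_{f,V}(0)$ via the Seade--Tib\u{a}r--Verjovsky description of the local Euler obstruction of a function. These give linear relations of exactly the shape you wrote, with the constants depending only on the fixed germ $(V,0)$, and then the semicontinuity argument you describe finishes (a) and (b) just as you say.

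The only real gap in your write-up is that you label both identities as things one ``expects'' or that ``should'' hold, without indicating where they come from; in \cite{Ni} the work is precisely in establishing them and identifying the constants. Your placement of the Cohen--Macaulay hypothesis is correct: it is exactly what validates the Bruce--Roberts additive decomposition, and without it the first identity (and hence the whole scheme) fails, as Example~\ref{ex2} in the present paper already illustrates.
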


We establish now the implication $(4_r)$ $\Rightarrow$ $(1_r)$ in relative case assuming that $LC(V)$ is Cohen-Macaulay such that $V$ is a hypersurface with isolated singularity.

\begin{thm}\label{th3}
Suppose $LC(V)$ is Cohen-Macaulay such that $V$ is a hypersurface with isolated singularity. Then $(4_r)$ follows $(1_r)$.
\end{thm}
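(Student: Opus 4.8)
The plan is to prove the implication $(1_r)\Rightarrow(4_r)$. The isolated-singularity hypothesis on $V$ is essential here: in Example \ref{ex1} the variety $V$ has a one-dimensional singular locus (the curve $\{y=-x^2,\,z=0\}$ lies in its critical set), and there $(1_r)$ holds while $(4_r)$ fails; so any proof must use decisively that $V$ is a hypersurface with isolated singularity and that $LC(V)$ is Cohen--Macaulay. The strategy is to run the proof of Corollary \ref{cor1} backwards, along the route
$$(1_r)\Rightarrow \mu,\mu_L,Eu\ \text{constant}\Rightarrow (X_0,Y)\ \text{Whitney regular}\Rightarrow \tfrac{\partial G}{\partial t}\in\overline{\langle z_i\,\partial G/\partial z_j\rangle}_{\OO_X}\Rightarrow (4_r),$$
using Gaffney's Theorem \ref{thga} as an equivalence, the transfer Lemma 6.7 of \cite{RT}, and Grulha's Theorem \ref{thni}.

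First I would reproduce the diagram of Corollary \ref{cor1}: write $V=\Phi^{-1}(0)$ with $\Phi:(\C^n,0)\to(\C,0)$, set $G(x,t)=(\Phi(x),F(x,t))$, $X=G^{-1}(0)$ with reduced structure, $Y=0\times\C$, and let $X_0$ be the smooth part of $X$. For each small $t$ the fibre $X_t=V\cap f_t^{-1}(0)$ is an isolated complete intersection singularity with $\mu(X_t)=\mu_L(f_t)$. Assuming $(1_r)$, that is $\mu_{BR}(V,f_t)$ constant, Theorem \ref{thni}(a) applies---precisely because $V$ is a hypersurface with isolated singularity and $LC(V)$ is Cohen--Macaulay---and yields that $\mu(f_t)$, $\mu_L(f_t)$ and $Eu_{f_t,V}(0)$ are all constant in the family.

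The decisive step is to promote this numerical constancy to Whitney regularity of the pair $(X_0,Y)$, and this is where I expect the main difficulty. By the equisingularity theory for families of isolated complete intersection singularities (Teissier, Gaffney), Whitney regularity of $(X_0,Y)$ is controlled not by $\mu(X_t)$ alone but by the whole polar-multiplicity (equivalently $\mu^{\ast}$) sequence of $X_t$; indeed $\mu(X_t)$-constancy by itself does not force Whitney regularity (Brian\c{c}on--Speder). The point I would have to establish is that the simultaneous constancy of the three invariants $\mu(f_t)$, $\mu_L(f_t)$ and $Eu_{f_t,V}(0)$ pins down these polar multiplicities, via the L\^{e}--Greuel relations among the Milnor numbers of $V$, of $f_t^{-1}(0)\cap V$ and of $f_t$, together with the reading of $Eu_{f_t,V}(0)$ as polar data. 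Verifying that these three invariants \emph{jointly} control the polar multiplicities is the technical heart of the argument.

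Granting Whitney regularity of $(X_0,Y)$, the rest is formal. Theorem \ref{thga}, read as an equivalence, gives $\frac{\partial G}{\partial t}\in\overline{\langle z_i\,\partial G/\partial z_j\rangle}_{\OO_X}$ for the field $\partial/\partial t$ tangent to $Y$. It then remains to transfer this module membership back to the ambient relative ideal: the correspondence of Lemma 6.7 in \cite{RT}, invoked in Corollary \ref{cor1} in the direction $(4_r)\Rightarrow(\ast)$, must be used in the reverse direction to obtain $\frac{\partial F}{\partial t}\in\overline{\jj_F(\Theta_V)}$, which is exactly $(4_r)$. I would flag this reverse implication of Lemma 6.7 as the one secondary point needing a careful check, since Corollary \ref{cor1} records only the forward direction.
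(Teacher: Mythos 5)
You have proved the wrong implication. Despite the garbled phrasing ``Then $(4_r)$ follows $(1_r)$'', the statement intended --- and proved --- in the paper is $(4_r)\Rightarrow(1_r)$: the introduction announces explicitly ``In Theorem \ref{th3} we show that the implication $(4_r)\Rightarrow(1_r)$ holds in relative case assuming that $LC(V)$ is Cohen-Macaulay such that $V$ is a hypersurface with isolated singularity'', and the paper's proof begins by \emph{assuming} $\frac{\partial F}{\partial t}\in\overline{\jj_F(\Theta_V)}$. The actual argument is three lines long and runs in the opposite direction from yours: since $\jj_F(\Theta_V)\subseteq\jj_F$, condition $(4_r)$ gives $\frac{\partial F}{\partial t}\in\overline{\jj_F}$, so $\mu(f_t)$ is constant by Greuel's Theorem \ref{thg}; Corollary \ref{cor1} (used in its stated, forward direction) gives $\mu_L(f_t)$ constant; and then item (b) of Grulha's Theorem \ref{thni} --- which is exactly where the hypotheses ``$V$ hypersurface with isolated singularity'' and ``$LC(V)$ Cohen--Macaulay'' enter, and which requires the already-established constancy of $\mu(f_t)$ --- yields $\mu_{BR}(V,f_t)$ constant, i.e.\ $(1_r)$. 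Your observation that Example \ref{ex1} only blocks $(1_r)\Rightarrow(4_r)$ for $V$ with nonisolated singularities is correct as far as it goes, but it led you to guess that the theorem asserts the converse implication under the stronger hypotheses; the paper makes no such claim.

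Even judged as an attempt at the converse $(1_r)\Rightarrow(4_r)$, the proposal is incomplete at precisely the step you yourself flag as its ``technical heart''. Constancy of $\mu(f_t)$, $\mu_L(f_t)$ and $Eu_{f_t,V}(0)$ is not shown to imply Whitney regularity of $(X_0,Y)$: as you note, Brian\c{c}on--Speder type examples show that constancy of a Milnor number does not force Whitney regularity, and no argument is given that these three invariants control the relevant polar multiplicities. Moreover, the final transfer is unjustified: Gaffney's Theorem \ref{thga} would give $\frac{\partial G}{\partial t}\in\overline{\langle z_i\frac{\partial G}{\partial z_j}\rangle}_{\OO_X}$, a statement about a module over $\OO_X$, and Lemma 6.7 of \cite{RT} is only invoked in the paper in the direction from $\frac{\partial F}{\partial t}\in\overline{\jj_F(\Theta_V)}$ to that module condition; you assume without proof that it reverses, i.e.\ that integral dependence on $X$ lifts to membership in $\overline{\jj_F(\Theta_V)}\subset\OO_{n+1}$. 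So the proposal both answers a question the theorem does not ask and leaves its two pivotal steps unproved.
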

\begin{proof}
Note that $\frac{\partial F}{\partial t}\in \overline{\jj_F(\Theta_V)}\subset \overline{\jj_F}$. Then $\mu(f_t)$ is constant by Theorem \ref{thg}. The Corollary \ref{cor1} implies that $\mu_L(f_t)$ is constant. Now, item (b) of Theorem \ref{thni} follows that $\mu_{BR}(f_t)$ is constant.
\end{proof}

\begin{rk}
Under the hypothesis of Theorem \ref{th3}, it follows also from Theorem 6.4 \cite{RT} that $(4_r)$ implies $C^0$-$\ir_V$ triviality of $F$.
\end{rk}

In the following example of \cite{To}, we investigate that all the conditions $(1_r)$ to $(6_r)$ are satisfied.

\begin{ex}
Let $V\subset(\C^2,0)$ defined by $\Phi(x,y)=x^3-y^2=0$ be a cusp. Consider $\Theta_V^0$ generated by $\xi_1=(2x,3y)$, $\xi_2=(2y,3x^2)$. In the $\ir_V$ classification of germs $f:\C^2\to\C$ given by \cite{BKD}, Theorem 4.9 we found the germ $F(x,y,t)=f_t(x,y)=y^2+ax^n+tx^{n+1}$, $n\geq 4$ that is finitely determined for $a\neq 0$.

Now, $df_t=(anx^{n-1}+(n+1)tx^n,2y)$. Therefore, $\jj_{f_t}(\Theta_V^0)=df_t(\Theta_V^0)=\langle 2(anx^n+t(n+1)x^{n+1})+6y^2,2(anx^{n-1}y+t(n+1)x^ny)+6x^2y\rangle$. Note that $V$ is a free divisor and its logarithmic stratification is holonomic, therefore it follows from \cite{BR}, Prop. 6.3 that $LC(V)$ is Cohen-Macaulay.

We compute $\mu_{BR}(f_t)=\dim_{\C}\frac{\OO_2}{\jj_{f_t}(\Theta_V^0)}\\
=\dim_{\C}\frac{\OO_2}{\langle 2(anx^n+(n+1)tx^{n+1})+6y^2,2(anx^{n-1}y+t(n+1)x^ny)+6x^2y\rangle}=n+3$, which is constant.

Now, let $\gamma(s)=(\gamma_1(s),\gamma_2(s),\gamma_3(s))$, then $\nu(\frac{\partial f_t}{\partial t}\circ\gamma)=(n+1)\nu(\gamma_1)$. Therefore, we have

If $n\nu(\gamma_1)<2\nu(\gamma_2)$, $\nu(df_t(\xi_1)\circ\gamma)=n\nu(\gamma_1)<(n+1)\nu(\gamma_1)$.

If $n\nu(\gamma_1)>2\nu(\gamma_2)$, $\nu(df_t(\xi_1)\circ\gamma)=2\nu(\gamma_2)<n\nu(\gamma_1)$.

If $n\nu(\gamma_1)=2\nu(\gamma_2)$, $\nu(df_t(\xi_2)\circ\gamma)=2\nu(\gamma_1)+\nu(\gamma_2)=(2+\frac{n}{2})\nu(\gamma_1)<(n+1)\nu(\gamma_1)$, $n\geq 3$.

It follows that $(2_r)$ holds, hence consequently all conditions hold.
\end{ex}

In the following example of \cite{To}, we investigate that all the conditions are satisfied except $(2_r)$.

\begin{ex}
Let $V\subset(\C^3,0)$ parameterized by $(u,-4v^3-2uv,3v^4-uv^2)$ be a swallowtail. We have $\Theta_V^0$ generated by $\xi_1=(2x,3y,4z)$, $\xi_2=(6y,-2x^2-8z,xy)$, $\xi_3=(-4x^2-16z,-8xy,y^2)$. In the $\ir_V$ classification of germs $f:\C^3\to\C$ given by \cite{BKD}, Theorem 4.10 we found the germ $F(x,y,z,t)=f_t(x,y,z)=z+ax^n+tx^{n+1}$, $n\geq 2$ that is finitely determined for $a\neq 0$ and for $n=2$ we must also have $a\neq 1/12$.

Now, $df_t=(anx^{n-1}+(n+1)tx^n,0,1)$. So, $\jj_{f_t}(\Theta_V^0)=df_t(\Theta_V^0)=\langle 2(anx^n+t(n+1)x^{n+1})+4z,6(anx^{n-1}y+t(n+1)x^ny)+xy,(-4x^2-16z)(anx^{n-1}+t(n+1)x^n)+y^2\rangle$. Note that $V$ is a free divisor and its logarithmic stratification is holonomic, so it follows from \cite{BR}, Prop. 6.3 that $LC(V)$ is Cohen-Macaulay.

We compute $\mu_{BR}(f_t)=\dim_{\C}\frac{\OO_3}{\jj_{f_t}(\Theta_V^0)}\\
=\dim_{\C}\frac{\OO_2}{\langle xy(1+6t(n+1)x^{n-1}+6anx^{n-2}),[-4x^2+8x^n(an+t(n+1)x)](anx^{n-1}+t(n+1)x^n)+y^2\rangle}$\\
$=\dim_{\C}\frac{\OO_2}{\langle xyu_1,-4x^{n+1}u_2+y^2\rangle}$, where $u_1=1+6t(n+1)x^{n-1}+6anx^{n-2}$ and $u_2=an-2anx^n(an+t(n+1)x)+t(n+1)x^3-2t(n+1)x^{n+1}(an+t(n+1)x)$ such that $u_1(0,0,t)\neq 0\neq u_2(0,0,t)$.\\
$=\dim_{\C}\frac{\OO_2}{\langle xy,4x^{n+1}+y^2\rangle}=n+3$, which is constant.

Now, using Theorem 4.5 in \cite{RT2} it follows that $x^{n+1}\in\overline{df_t(\Theta_V^0)}$, consequently $(3_r)$ holds. Taking the curve $\gamma(s)=(s,0,-\frac{an}{2}s^n,0)$ we see that $(2_r)$ does not hold.
\end{ex}

\begin{ex}
In Example \ref{ex1}, $V$ is not a free divisor, but we see that $df_t(\Theta_V)=(4(a+t)x^2+4y,2z,2(a+t)(x^2+3y)x-4xy)$. Then $C$ is a complete intersection and we can apply Prop. \ref{th2} to get $(1_r)\Leftrightarrow(6_r)$.
\end{ex}

\section{Quasihomogeneous functions and varieties}

The purpose is to prove a relative version of the following theorem due to Greuel, see \cite{Gr}.

\begin{thm}\label{thg2}
Let $f$ be a quasihomogeneous polynomial with isolated singularity and $F:(\C^n\times\C,0)\to(\C,0)$, $(x,t)\mapsto F(x,t)=f_t(x)$ a $\mu$-constant deformation of $f$. Then $m(f_t)=m(f)$ for small values of $t$.
\end{thm}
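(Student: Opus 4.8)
The plan is to prove the two inequalities $m(f_t)\le m(f)$ and $m(f_t)\ge m(f)$ separately, for $t$ small. The first is free from general principles: the order of vanishing at the origin is upper semicontinuous in a flat family, with the central fibre on top, so $m(f_t)\le m(f_0)=m(f)$ for all $t$ near $0$. All the work is in the reverse inequality, and this is where both the quasihomogeneity of $f$ and the $\mu$-constancy of $F$ must enter. Throughout I write $f$ as quasihomogeneous of type $(w_1,\dots,w_n;d)$ with $w_{\max}=\max_i w_i$, and I record that by Theorem \ref{thg} the hypothesis that $F$ is $\mu$-constant is equivalent to $\partial F/\partial t\in\overline{\jj_F}$ in $\OO_{n+1}$.

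The first key step is a combinatorial lemma about the multiplicity of an isolated quasihomogeneous singularity: I claim $m(f)=\lceil d/w_{\max}\rceil$, equivalently that every monomial $x^\alpha$ of weighted degree $\langle w,\alpha\rangle\ge d$ has ordinary degree $|\alpha|\ge m(f)$. The inequality $m(f)\ge\lceil d/w_{\max}\rceil$ is immediate, since every monomial occurring in $f$ satisfies $d=\langle w,\alpha\rangle\le w_{\max}\,|\alpha|$. For the reverse I would invoke the characterization of isolated quasihomogeneous singularities (Milnor--Orlik/Saito): for each variable $f$ must contain a monomial $x_i^{a_i}$ or $x_i^{a_i}x_j$. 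Applying this to the variable of largest weight produces a monomial of $f$ of ordinary degree exactly $\lceil d/w_{\max}\rceil$, whence $m(f)\le\lceil d/w_{\max}\rceil$. Once the lemma is in hand, the tautology $\langle w,\alpha\rangle\ge d\Rightarrow|\alpha|\ge m(f)$ gives the tool I need: \emph{no perturbation built from monomials of weighted degree $\ge d$ can lower the multiplicity.}

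The crux is therefore to show that, after an analytic change of coordinates preserving the origin, the $\mu$-constant deformation $F$ can be written as $f$ plus terms of weighted degree $\ge d$. The natural route is the structure theory of the $\mu$-constant stratum of a quasihomogeneous germ: deformations by monomials of weighted degree $\ge d$ are automatically $\mu$-constant, and for quasihomogeneous $f$ these exhaust the $\mu$-constant stratum of the semiuniversal deformation, so any $\mu$-constant $F$ is right-equivalent to such a ``nonnegative weight'' deformation. Combining this normal form with the lemma yields $m(f_t)\ge m(f)$, and together with semicontinuity we conclude $m(f_t)=m(f)$ for small $t$.

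I expect this last step to be the main obstacle, and it must genuinely use the family condition $\partial F/\partial t\in\overline{\jj_F}$ over the whole total space rather than its restriction to $t=0$. Indeed, restricting gives only $\partial F/\partial t|_{t=0}\in\overline{\jj_f}$, and $\overline{\jj_f}$ already contains monomials of ordinary degree $m(f)-1$ (for instance $xy\in\overline{\jj_f}$ for $f=x^3+y^3$); the family $x^3+y^3+txy$ satisfies this restricted condition yet has $\mu$ dropping from $4$ to $1$, so it fails to be $\mu$-constant and its multiplicity does fall. Thus the restricted condition is too weak, and the argument must control the weighted order of the perturbation using the integral-closure condition (equivalently, the non-splitting of the polar curve, condition $(6)$ of Theorem \ref{thg}) over $\C^n\times\C$. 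Making this weighted-order control precise --- either via the deformation-theoretic normal form above, or via a direct Newton-polyhedron/filtration analysis of $\overline{\jj_F}$ on the total space --- is the heart of the proof.
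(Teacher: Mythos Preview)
The paper does not supply its own proof of this theorem: Theorem~\ref{thg2} is quoted verbatim from Greuel \cite{Gr} and used as a black box in the proof of the relative version, Theorem~\ref{th5}. So there is nothing in the paper to compare your argument against.

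On the merits of your outline: the semicontinuity step and the combinatorial lemma $m(f)=\lceil d/w_{\max}\rceil$ are fine, and the Saito/Kouchnirenko condition you invoke does produce a monomial of the right ordinary degree. The difficulty is exactly where you place it, and your proposal does not actually resolve it. You assert that ``for quasihomogeneous $f$ the nonnegative-weight deformations exhaust the $\mu$-constant stratum of the semiuniversal deformation,'' but this statement is essentially equivalent in strength to the theorem you are trying to prove; citing it without argument is circular, and the deeper results that do imply it (constancy of the spectrum in $\mu$-constant families, \`a la Varchenko) are far heavier than what Greuel needs. Your own final paragraph concedes that this normal-form step is ``the heart of the proof'' and leaves it open, so what you have is a plan rather than a proof.

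For comparison, Greuel's actual argument in \cite{Gr} is short and does not pass through a normal form for the deformation. He uses the strict-inequality form of the $\mu$-constant condition (item~(2) of Theorem~\ref{thg}) to get $\partial F/\partial t\in\overline{m_n\jj_F}$, combines this with the Euler relation $f\in m_n\jj_f$ available from quasihomogeneity, and deduces the multiplicity bound directly via an integral-closure/order-of-vanishing computation. If you want to complete your write-up, that is the route to follow rather than the deformation-theoretic detour.
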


In this theorem, Greuel shows that for all $\mu$-constant deformations of a quasihomogeneous singularity the multiplicity does not change. Since constant topological type implies constant Milnor number (see \cite{Te}) he obtains, in the special cases treated in \cite{Gr}, a positive answer to Zariski's question whether for a hypersurface singularity the multiplicity is an invariant of the topological type.

In Theorem \ref{th5} we establish the relative version of above theorem. The result will follow as consequence of the following L\^e-Greuel type formula given by Nu\~no-Ballesteros, Oréfice and Tomazella in \cite{Bruna}.

\begin{thm}\label{thlg}(\cite{Bruna})
Let $(V,0)$ be a germ of hypersurface with isolated singularity defined by a weighted homogeneous function germ $\Phi:(\C^n,0)\to(\C,0)$ and let $f:(\C^n,0)\to(\C,0)$ be a $\ir_V$-finitely determined function germ. Then
$$\mu_{BR}(V,f)=\mu(f)+\mu_L(f).$$
\end{thm}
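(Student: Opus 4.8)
The plan is to make the defining ideal of $\mu_{BR}(V,f)$ explicit using the weighted homogeneity of $\Phi$, and then to read off both Milnor numbers from a single L\^e--Greuel computation on the relative polar curve $V(I_2)$.

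\textbf{Step 1: explicit generators.} Let $\Phi$ be weighted homogeneous of weights $(w_1,\dots,w_n)$ and degree $d$, so that the Euler field $E=\sum_i w_ix_i\,\partial/\partial x_i$ satisfies $E(\Phi)=d\,\Phi$ and lies in $\Theta_V$. For any $\xi\in\Theta_V$ one has $\xi(\Phi)=c\Phi$ with $c\in\OO_n$, and $\xi-\tfrac cd E$ annihilates $\Phi$; hence $\Theta_V=\OO_n\,E\oplus\{\xi:\xi(\Phi)=0\}$. Since $\Phi$ has an isolated singularity, $\partial_1\Phi,\dots,\partial_n\Phi$ is a regular sequence, so the vector fields killing $\Phi$ are exactly the syzygies of $\jj_\Phi$ and are generated by the Koszul fields $k_{ij}=\partial_j\Phi\,\partial_i-\partial_i\Phi\,\partial_j$. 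As $k_{ij}(f)=\partial_j\Phi\,\partial_if-\partial_i\Phi\,\partial_jf$ is, up to sign, the maximal minor $M_{ij}$ of the Jacobian matrix of $(\Phi,f)$, I would obtain
\[
\jj_f(\Theta_V)=\langle E(f)\rangle+I_2,\qquad I_2=\langle M_{ij}:i<j\rangle .
\]

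\textbf{Step 2: pass to the polar algebra.} Set $A=\OO_n/I_2$. Since $I_2$ is generated by the maximal minors of a $2\times n$ matrix and $\ir_V$-finiteness of $f$ forces $V(I_2)$ to be a curve (codimension $n-1$), the Eagon--Northcott complex shows $A$ is a one-dimensional Cohen--Macaulay ring. Both $\Phi$ and $E(f)$ are nonzerodivisors on $A$, because $\mu_{BR}(V,f)$ and $\mu(V)+\mu_L(f)$ are finite; hence, $(E(f))$ and $(\Phi)$ being parameter ideals in a Cohen--Macaulay ring,
\[
\mu_{BR}(V,f)=\dim_\C A/E(f)A=e(E(f);A),\qquad \mu(V)+\mu_L(f)=\dim_\C A/\Phi A=e(\Phi;A),
\]
the second equality being the L\^e--Greuel formula applied to the ICIS $(\Phi,f)$ together with $\mu_L(f)=\mu(V\cap f^{-1}(0))$ (recall $A/\Phi A=\OO_n/((\Phi)+I_2)$).

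\textbf{Step 3: the proportionality.} The Euler identity $\sum_i w_ix_i\,M_{ij}=d\,\Phi\,\partial_jf-\partial_j\Phi\,E(f)$ shows $\Phi\,\partial_jf\equiv\tfrac1d\,\partial_j\Phi\,E(f)\pmod{I_2}$ for all $j$; that is, the gradients $\nabla f$ and $\nabla\Phi$ are proportional along every branch of the polar curve. Writing $v_c$ for the order along a reduced branch $c$, $o_c(\nabla f)=\min_j v_c(\partial_jf)$, and $\ell_c$ for the multiplicity of that component in $A$, the relation $v_c(\Phi)+o_c(\nabla f)=v_c(E(f))+o_c(\nabla\Phi)$ together with the branchwise multiplicity formula $e(\,\cdot\,;A)=\sum_c\ell_c\,v_c(\,\cdot\,)$ gives $e(E(f);A)-e(\Phi;A)=e(\jj_f;A)-e(\jj_\Phi;A)$. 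Combined with Step 2, the whole theorem reduces to the single identity
\[
e(\jj_f;A)-e(\jj_\Phi;A)=\mu(f)-\mu(V).
\]

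\textbf{Main obstacle.} Proving this last identity is the heart of the matter. It asserts that the difference of the multiplicities, along the relative polar curve, of the two Jacobian ideals $\jj_f$ and $\jj_\Phi$ equals the difference $\mu(f)-\mu(V)$ of their Milnor numbers in $\OO_n$. The difficulty is genuine: because $A$ is in general non-reduced, $e(\jj_f;A)=\sum_c\ell_c\,o_c(\nabla f)$ strictly exceeds the naive colength $\dim_\C A/\jj_fA=\mu(f)$ (note $I_2\subseteq\jj_f$), and one must show that the resulting correction term is the \emph{same} for $f$ and for $\Phi$, so that only $\mu(f)-\mu(V)$ survives. This is precisely where the weighted homogeneity of $\Phi$ is indispensable: through the Euler field it couples $\jj_f$ to $\jj_\Phi$ along the polar curve (the proportionality above), and within the Cohen--Macaulay framework this coupling is what forces the corrections to cancel. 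The earlier steps are essentially formal once these inputs are in place.
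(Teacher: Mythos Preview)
This theorem is not proved in the paper at all: it is quoted verbatim from \cite{Bruna} and then used as a black box in the proofs of Theorem~\ref{th5} and of the topological invariance of $\mu_{BR}$. There is therefore no ``paper's own proof'' to compare your attempt against.

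On its own merits, your outline is a reduction, not a proof, and you acknowledge this. Steps~1--3 are correct and nicely organised: the splitting $\Theta_V=\OO_n E\oplus\{\xi:\xi(\Phi)=0\}$ via the Euler field, the identification $\jj_f(\Theta_V)=\langle E(f)\rangle+I_2$ with $I_2$ the ideal of $2\times2$ minors of the Jacobian of $(\Phi,f)$, the Cohen--Macaulayness of $A=\OO_n/I_2$ from Eagon--Northcott, the use of L\^e--Greuel to interpret $\dim_\C A/\Phi A$, and the Euler-identity computation yielding $e(E(f);A)-e(\Phi;A)=e(\jj_f;A)-e(\jj_\Phi;A)$ are all sound. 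But the entire argument then hinges on the identity
\[
e(\jj_f;A)-e(\jj_\Phi;A)=\mu(f)-\mu(V),
\]
which you do not establish. Your final paragraph explains why it is plausible and where the weighted homogeneity of $\Phi$ ought to enter, but ``forces the corrections to cancel'' is a hope, not an argument; as you yourself note, $e(\jj_f;A)$ and $\dim_\C A/\jj_fA$ differ in general, and controlling that discrepancy simultaneously for $f$ and for $\Phi$ is exactly the missing content. Until this step is actually carried out---for instance by an explicit exact-sequence comparison of $A/\jj_fA$, $A/\jj_\Phi A$, $\OO_n/\jj_f$ and $\OO_n/\jj_\Phi$, or by invoking a suitable polar-multiplicity formula---what you have is a clean reformulation of the problem rather than a proof of the theorem.
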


\begin{thm}\label{th5}
Let $V$ be a quasihomogeneous hypersurface in $\C^n$ with isolated singularity. Let $f:(\C^n,0)\to(\C,0)$ be a quasihomogeneous function with isolated singularity on $V$ and $F:(\C^n\times\C,0)\to (\C,0)$ a $\mu_{BR}$-constant deformation of $f$. Then $m(f_t)=m(f)$ for small values of $t$.
\end{thm}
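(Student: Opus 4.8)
The plan is to reduce the relative statement to the classical theorem of Greuel (Theorem \ref{thg2}) by showing that the hypothesis of $\mu_{BR}$-constancy forces the \emph{ordinary} Milnor number $\mu(f_t)$ to be constant along the deformation. Since $(V,0)$ is a quasihomogeneous hypersurface with isolated singularity, its defining germ $\Phi$ is weighted homogeneous, so on the one hand the L\^e--Greuel formula of Theorem \ref{thlg} is available, and on the other hand $LC(V)$ is Cohen--Macaulay by \cite{Bruna}, which makes Theorem \ref{thni} applicable. Both facts will be used.

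First I would observe that $\mu_{BR}$-constancy keeps each $f_t$ finitely determined: as $\mu_{BR}(V,f_t)=\mu_{BR}(V,f)<\infty$ for small $t$, every $f_t$ is $\ir_V$-finitely determined, and in particular $\mu(f_t)\le\mu_{BR}(V,f_t)<\infty$ so that $f_t$ has an isolated singularity in the usual sense. Applying Theorem \ref{thlg} to each member of the family then gives
$$\mu_{BR}(V,f_t)=\mu(f_t)+\mu_L(f_t)$$
for all small $t$, and the left-hand side is constant by hypothesis.

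The heart of the argument is to separate the two summands. Both $t\mapsto\mu(f_t)$ and $t\mapsto\mu_L(f_t)$ are upper semicontinuous, so for $t$ near $0$ one has $\mu(f_t)\le\mu(f)$ and $\mu_L(f_t)\le\mu_L(f)$; since their sum is the constant $\mu_{BR}(V,f)=\mu(f)+\mu_L(f)$, both inequalities must be equalities, and hence $\mu(f_t)$ and $\mu_L(f_t)$ are each individually constant. (Alternatively, because $LC(V)$ is Cohen--Macaulay one may invoke Theorem \ref{thni}(a) directly to deduce constancy of $\mu(f_t)$.) I expect this separation step to be the main obstacle: it is exactly where one needs the semicontinuity of the L\^e number $\mu_L$ --- or, equivalently, Grulha's theorem --- and it is the only place the Cohen--Macaulay and weighted-homogeneity hypotheses on $V$ are genuinely used.

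Once $\mu(f_t)\equiv\mu(f)$ is established, the deformation $F$ is an ordinary $\mu$-constant deformation of $f$. Since $f$ is quasihomogeneous with isolated singularity --- hence a weighted homogeneous polynomial --- Greuel's Theorem \ref{thg2} applies verbatim and yields $m(f_t)=m(f)$ for all small $t$, which is the desired conclusion.
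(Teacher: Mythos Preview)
Your proposal is correct and follows essentially the same route as the paper: apply the L\^e--Greuel formula of Theorem \ref{thlg} to write $\mu_{BR}(V,f_t)=\mu(f_t)+\mu_L(f_t)$, use upper semicontinuity of $\mu$ and $\mu_L$ to force each summand to be constant, and then invoke Greuel's Theorem \ref{thg2}. Your write-up is more detailed (the finite-determinacy remark, the alternative appeal to Theorem \ref{thni}(a)), but the argument is the same.
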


\begin{proof}
By Theorem \ref{thlg} we have that if $\mu_{BR}(f_t)$ is constant, then $\mu(f_t)$ and $\mu_L(f_t)$ are constant by the upper semicontinuity of these numbers. Now, the result follows directly from Theorem \ref{thg2}.
\end{proof}

We give an example to show that the above result does not hold if we replace $\mu_{BR}$ by $\mu_L$, see Example V. 1. in \cite{BGG}.

\begin{ex}
Let $V=\Phi^{-1}(0)$, $\Phi(x,y,z)=x^{15}+y^{10}+z^6$ and $f_t:\C^3\to\C$, $f_t(x,y,z)=xy+tz$ are weighted homogeneous with respect to weights (2,3,5). The function $f_0=xy$ is not $\ir_V$-finitely determined, i.e., $\mu_{BR}(f_t)$ is not finite, however the curve $\Phi^{-1}(0)\cap f_0^{-1}(0)$ is an $ICIS$ in $\C^3$, see \cite{BGG}. Moreover, from \cite{BGG} it follows that $\mu(\Phi^{-1}(0)\cap f_t^{-1}(0))=\mu_L(f_t)=126$, is constant for small values of $t$ and $m(f_t)$ is not constant.
\end{ex}

As other application of the Theorem \ref{thlg} we can show the $\ir_V$-topological invariance of $\mu_{BR}$, when $V$ is a weighted homogeneous hypersurface with isolated singularity.

\begin{thm}
Let $V$ be a quasihomogeneous hypersurface in $\C^n$ with isolated singularity. Let $f,g:(\C^n,0)\to(\C,0)$ be $\ir_V$-finitely determined and $f$ $C^0$-$\ir_V$-equivalent to $g$. Then $\mu_{BR}(f)=\mu_{BR}(g)$.
\end{thm}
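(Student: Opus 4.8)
The plan is to reduce everything to the L\^e--Greuel type formula of Theorem \ref{thlg}, which for an $\ir_V$-finitely determined germ $h$ on a quasihomogeneous hypersurface $V$ with isolated singularity reads $\mu_{BR}(V,h)=\mu(h)+\mu_L(h)$. Since $V$ is exactly such a hypersurface and both $f$ and $g$ are $\ir_V$-finitely determined, this formula applies to both germs, giving $\mu_{BR}(f)=\mu(f)+\mu_L(f)$ and $\mu_{BR}(g)=\mu(g)+\mu_L(g)$; in particular each $\mu(h)$ is finite, so $f$ and $g$ have isolated singularities. It therefore suffices to establish the two equalities $\mu(f)=\mu(g)$ and $\mu_L(f)=\mu_L(g)$ separately. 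To bring in the hypothesis, I would first unwind the $C^0$-$\ir_V$-equivalence: it provides a germ of homeomorphism $\psi:(\C^n,0)\to(\C^n,0)$ with $\psi(V)=V$ and $f\circ\psi=g$.

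For the classical Milnor number, the relation $f\circ\psi=g$ shows that $f$ and $g$ are $C^0$-$\ir$-equivalent, so the hypersurface germs $f^{-1}(0)$ and $g^{-1}(0)$ are homeomorphic; since the Milnor number is a topological invariant of a hypersurface singularity (as recalled in the Introduction), this yields $\mu(f)=\mu(g)$. For the L\^e number I would observe that $f\circ\psi=g$ forces $\psi(g^{-1}(0))=f^{-1}(0)$, and combined with $\psi(V)=V$ and the bijectivity of $\psi$ this gives $\psi(V\cap g^{-1}(0))=V\cap f^{-1}(0)$. Thus the two isolated complete intersection singularities $V\cap f^{-1}(0)$ and $V\cap g^{-1}(0)$ are homeomorphic as germs. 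Recalling from Corollary \ref{cor1} that $\mu_L(h)=\mu(V\cap h^{-1}(0))$, the desired equality $\mu_L(f)=\mu_L(g)$ becomes the statement that the Milnor number of this ICIS is unchanged under a germ homeomorphism. Granting that, Theorem \ref{thlg} closes the argument: $\mu_{BR}(f)=\mu(f)+\mu_L(f)=\mu(g)+\mu_L(g)=\mu_{BR}(g)$.

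The hard part will be the last input: the topological invariance of the Milnor number of the ICIS $V\cap f^{-1}(0)$. Unlike the hypersurface case, this is not the classical Milnor--Teissier invariance, and I would address it either by citing the known topological invariance of the Milnor number for isolated complete intersection singularities, or by expressing $\mu$ of the complete intersection through the Euler characteristic of its Milnor fibre via $\mu=(-1)^{\dim}(\chi-1)$ and arguing that this Euler characteristic depends only on the embedded topological type transported by $\psi$. A point I would check carefully is that $\psi$ respects not merely the abstract analytic sets but the embedded pairs $(\C^n,V\cap h^{-1}(0))$, so that the induced identification is genuinely a homeomorphism of germs of ICIS; this is precisely what $\psi(V)=V$ together with $f\circ\psi=g$ guarantees. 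Once invariance of $\mu_L$ is secured, the additive formula of Theorem \ref{thlg} delivers the conclusion as above.
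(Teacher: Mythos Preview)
Your approach is essentially the paper's: apply Theorem \ref{thlg} to reduce to $\mu(f)=\mu(g)$ and $\mu_L(f)=\mu_L(g)$, get the first from ordinary $C^0$-$\ir$-equivalence, the second from topological invariance, and add. The paper's proof is two lines and simply asserts that $\mu_L$ is a topological invariant.

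One remark on your handling of $\mu_L$. You route it through the identification $\mu_L(h)=\mu(V\cap h^{-1}(0))$ read off from Corollary \ref{cor1}, and then appeal to topological invariance of the Milnor number of an ICIS. This is a detour, and the identification as a literal equality is delicate (the L\^e--Greuel formula for ICIS relates $\mu(V\cap h^{-1}(0))$ to $\mu_L(h)$ with an extra $\mu(V)$ term; in Corollary \ref{cor1} only the constancy, not the equality, is really being used). The cleaner and more direct argument, which is what the paper has in mind, is that your homeomorphism $\psi$ restricts to a homeomorphism $(V,0)\to(V,0)$ with $f|_V\circ\psi|_V=g|_V$, so by L\^e's theorem \cite{Le1} the Milnor fibrations of $f|_V$ and $g|_V$ over $V$ are topologically the same, and hence the number of spheres in the Milnor fibre, namely $\mu_L$, agrees. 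This avoids both the ICIS invariance question you flag as the ``hard part'' and any reliance on the precise form of the identity in Corollary \ref{cor1}.
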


\begin{proof}
The hypothesis imply that $\mu_L(f)=\mu_L(g)$, this number is a topological invariant. Also, that $f$ and $g$ are $C^0$-$\ir$-equivalent, then it is well known that  $\mu(f)=\mu(g)$ \cite{Mi}. Now, we use the Theorem \ref{thlg} to get $\mu_{BR}(f)=\mu_{BR}(g)$.
\end{proof}

{\bf Acknowledgements}: The authors thank J. J. Nu$\tilde{n}$o-Ballesteros for useful comments. The first author is thankful to FAPESP for its support and the second author is thankful to FAPESP and CNPq for their partial supports in producing this manuscript.





\end{document}